\title{The Nakayama automorphism of a\\self-injective preprojective algebra}
\author{Joseph Grant\\University of East Anglia, Norwich, UK\\\texttt{j.grant@uea.ac.uk}}
\date{\vspace{-2em}}
\newcommand{\F}{\mathbb{F}}
\newcommand{\Hom}{\operatorname{Hom}\nolimits}
\newcommand{\arr}[1]{\stackrel{#1}{\to}}
\newcommand{\mMod}{\operatorname{-mod}}
\newcommand{\Modm}{\operatorname{mod-}}
\newcommand{\op}{{\operatorname{op}\nolimits}}
\newcommand{\add}{\operatorname{-add}}
\newcommand{\da}{\text{-}}
\newcommand{\Ext}{\operatorname{Ext}\nolimits}
\newcommand{\stmod}{\operatorname{-\underline{mod}}}
\newcommand{\costmod}{\operatorname{-\overline{mod}}}
\newcommand{\larr}[1]{\stackrel{#1}{\longrightarrow}}
\newcommand{\Z}{\mathbb{Z}}
\newcommand{\grsh}[1]{\{{#1}\}}
\newcommand{\gen}[1]{\langle#1\rangle}
\newcommand{\id}{\operatorname{id}\nolimits}
\newcommand{\iP}{\mathbin{\rotatebox[origin=c]{180}{$\Pi$}}}
\newtheorem{theorem}{Theorem}[section]
\newtheorem{corollary}[theorem]{Corollary}
\newtheorem{proposition}[theorem]{Proposition}
\newtheorem{definition-proposition}[theorem]{Definition-Proposition}
\newtheorem*{thma}{Theorem A}
\newtheorem*{thmb}{Theorem B}
\theoremstyle{definition}
\newtheorem{definition}[theorem]{Definition}
\newtheorem{remark}[theorem]{Remark}
\newtheorem{example}[theorem]{Example}
\begin{document}

\maketitle

\begin{abstract}
We give a simple proof, using Auslander-Reiten theory, that the preprojective algebra of a basic hereditary algebra of finite representation type is Frobenius.  We then describe its Nakayama automorphism, which is induced by the Nakayama functor on the module category of our hereditary algebra.

\emph{Keywords:} preprojective algebra; Nakayama automorphism; hereditary algebra; representation finite; Frobenius algebra.
\end{abstract}

\tableofcontents

\setlength{\parindent}{0pt}
\setlength{\parskip}{1em plus 0.5ex minus 0.2ex}

\section{Introduction}

\subsection{History}
Preprojective algebras were introduced by Gabriel and Ponomarev as a way to combine all the preprojective representations of a quiver into a single algebra \cite{gp}.  The path algebra of the quiver, or its opposite, is a subalgebra of the preprojective algebra, and on restriction to this subalgebra we obtain all the indecomposable preprojective modules up to isomorphism.  Preprojective algebras were originally defined by an explicit presentation: one doubles the starting quiver and quotients by an admissible ideal.  Later, Baer, Geigle, and Lenzing found a more conceptual definition which works for any hereditary algebra \cite{bgl}.  Their definition uses the natural multiplication on morphisms between preprojective modules.  Ringel and Crawley-Boevey proved that, if one gets the signs right, these definitions give isomorphic algebras \cite{rin,cb-quiv}.

Gabriel showed that the representation type of a quiver depends on the underlying graph: a quiver is representation finite precisely when it is an orientation of an ADE Dynkin diagram \cite{gab-ade}.  So one expects the associated preprojective algebras to have different properties in the Dynkin and non-Dynkin cases.  If our quiver has finitely many isoclasses of indecomposable modules then in particular it has finitely many isoclasses of indecomposable preprojective modules, thus its preprojective algebra is finite-dimensional.  If instead we start with a quiver of infinite representation type then one can find infinitely many non-isomorphic indecomposable preprojective modules, and thus the preprojective algebra is infinite-dimensional.

In the Dynkin case, the preprojective algebra is not only finite-dimensional: it is self-injective.  This is a much stronger property than just being finite-dimensional.  It says that every projective module is injective, and vice-versa.  As indecomposable projectives are projective covers of simple modules, and indecomposable injective modules are injective envelopes of simple modules, we obtain a permutation of the simple modules for our algebra, known as the Nakayama permutation.

Self-injectivity is a Morita-invariant property which minimally encompasses Frobenius algebras: an algebra is self-injective if and only if it is Morita equivalent to a Frobenius algebra.  An algebra is Frobenius when its left regular module is isomorphic to the dual of its right regular module.  Frobenius algebras come with a distinguished outer automorphism known as the Nakayama automorphism which induces the Nakayama permutation.

The fact that preprojective algebras of Dynkin quivers are self-injective seems to have been a folklore result for some time.  The first written statement known to the author is by Ringel and Schofield in their handwritten manuscript \cite{rs}.  They give an automorphism of each ADE Dynkin graph and state that this induces the Nakayama permutation of the preprojective algebra.  There is related work by Auslander and Reiten \cite{ar-dtr} and Buchweitz \cite{b-fin}.

Later, Brenner, Butler, and King made a detailed study of the ADE preprojective algebras as part of their study of trivial extension algebras with periodic bimodule resolutions.  They gave an explicit formula for the Nakayama antomorphism and wrote out detailed checks that it satisfies the Frobenius algbera condition in each of the ADE cases, thereby showing that these preprojective algebras are self-injective \cite{bbk}.  These checks rely on combinatorics specific to each Dynkin diagram and are quite involved, especially for types D and E.

Up to this point, all proofs of self-injectivity relied on Gabriel's classification theorem and then used combinatorics of Dynkin diagrams.  One might hope for a direct proof that the preprojective algebra of a representation finite hereditary algebra is self-injective.  This was achieved by Iyama and Oppermann as a special case of a much more general result, as part of their study of stable categories of higher preprojective algebras \cite{io-stab}.  Their proof uses much more sophisticated technology: they deduce self-injectivity of the preprojective algebra from the stability of a cluster-tilting subcategory of the derived category under the Serre functor.  Since Iyama and Oppermann's result, other proofs of self-injectivity have been found, such as \cite[Corollary 12.7]{gls} and \cite[Corollary 4.13]{gi}.

\subsection{Results}
This article contains two main results.  The first is a new proof of the result we have been discussing:

\begin{thma}[{Theorem \ref{thm:pi-is-frob}}]
The preprojective algbera of a representation-finite hereditary algebra is self-injective.
\end{thma}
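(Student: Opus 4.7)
The plan is to use the Baer--Geigle--Lenzing description
\[
\Pi = \bigoplus_{i \geq 0} \Hom_H(H, \tau^{-i} H),
\]
which realises $\Pi$ as a non-negatively graded algebra with $\Pi_0 = H$ a subalgebra and with multiplication defined via the Auslander--Reiten translate $\tau$. Representation-finiteness of $H$ ensures that every indecomposable $H$-module has the form $\tau^{-i} P_j$ for a unique pair $(i,j)$, and that the $\tau$-orbit of each indecomposable projective $P_j$ is finite, terminating at an indecomposable injective $\tau^{-d_j} P_j = I_{\pi(j)}$ for some permutation $\pi$ of the vertex set. In particular $\Pi$ is finite-dimensional and inherits a complete set of primitive idempotents $\{e_j\}$ from $H \subset \Pi_0$.

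The core step is to compute the $\Pi$-socle of each indecomposable projective $\Pi e_j$. As a left $H$-module we have $\Pi e_j = \bigoplus_{i \geq 0} \tau^{-i} P_j$, and since $\Pi$ is non-negatively graded with $\Pi_0$ finite-dimensional one has $\operatorname{rad}(\Pi) = \operatorname{rad}(H) + \Pi_{>0}$. Using the BGL multiplication, the action of $\Pi_{>0}$ on the summand $\tau^{-k} P_j$ lands in strictly higher layers $\tau^{-k'} P_j$ with $k' > k$, and a faithfulness property of $\tau^{-1}$ coming from classical AR theory shows that this shift does not vanish when the target layer is nonzero. Hence the $\Pi_{>0}$-annihilated submodule of $\Pi e_j$ is exactly the top layer $\tau^{-d_j} P_j = I_{\pi(j)}$, and its $\operatorname{rad}(H)$-annihilated part is the $H$-socle $S_{\pi(j)}$. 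So $\operatorname{soc}_\Pi(\Pi e_j) = S_{\pi(j)}$ is simple.

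Each $\Pi e_j$ therefore has essential simple socle and embeds in the injective envelope $E(S_{\pi(j)})$. Using the standard identification $\dim E(S_k) = \dim(e_k \Pi)$ together with the bijectivity of $\pi$,
\[
\dim \Pi = \sum_j \dim \Pi e_j \leq \sum_j \dim E(S_{\pi(j)}) = \sum_k \dim e_k \Pi = \dim \Pi.
\]
Equality forces each embedding $\Pi e_j \hookrightarrow E(S_{\pi(j)})$ to be an isomorphism, so every indecomposable projective $\Pi$-module is injective and $\Pi$ is self-injective.

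I expect the main obstacle to be the socle computation: the intuition that $\Pi_{>0}$ shifts $\tau$-layers strictly upward is transparent, but ruling out annihilation of elements below the top layer requires an AR-theoretic faithfulness statement for $\tau^{-1}$ on morphisms out of $H$ with non-injective target. This is the essential classical input that makes a direct, representation-theoretic proof possible in the finite case.
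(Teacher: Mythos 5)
Your strategy---compute $\operatorname{soc}_\Pi(\Pi e_j)$, show it is the simple $S_{\pi(j)}$, and then force $\Pi e_j\cong E(S_{\pi(j)})$ by the dimension count based on $E(S_k)\cong(e_k\Pi)^*$---is genuinely different from the paper's proof and is viable, but as written the pivotal step is a gap. The claim that the $\Pi_{>0}$-annihilated part of $\Pi e_j$ is exactly the top layer is where all the content sits, and ``a faithfulness property of $\tau^{-1}$'' is not a citable statement of classical AR theory: $\tau^{-}$ is faithful only modulo maps factoring through injective modules, and a priori the relevant map could so factor. The precise argument you need is the following. Writing $\tau^{-}\cong E\otimes_\Lambda-$ as in Proposition \ref{prop:AR-adjoint}, the span of the degree-one action on a homogeneous $x\in\tau^{-k}P_j$ is the image of $\tau^{-}(\iota)$, where $\iota\colon\Lambda x\into\tau^{-k}P_j$ is the inclusion of the cyclic submodule generated by $x$. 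Since $E\otimes_\Lambda-$ kills injectives and descends to the equivalence $\Lambda\costmod\arr\sim\Lambda\stmod$, one has $\tau^{-}(\iota)=0$ if and only if $\iota$ factors through an injective. But $\Lambda$ is hereditary, so every quotient of an injective is injective; hence the image of any map from an injective into the indecomposable non-injective module $\tau^{-k}P_j$ is an injective submodule, thus a direct summand, thus zero. So for $x\neq0$ below the top layer $\iota$ cannot factor through an injective, the degree-one action on $x$ is nonzero, and your socle computation goes through; without this (or an equivalent argument) the proof is incomplete. You should also justify that $\pi$ is a bijection (distinct $\tau^{-}$-orbits of indecomposable projectives end at non-isomorphic injectives, and there are as many indecomposable injectives as projectives), since the dimension count uses it; and note that with the conventions of Definition \ref{def:preproj} the degree-zero part of $\Pi$ is $\Lambda^{\op}$ and the module you describe is $\varepsilon_j\Pi$, which only permutes left and right in the argument.

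Once that step is supplied, it is worth comparing routes. The paper never computes socles: it builds an explicit isomorphism $\Hom_\Lambda(P_i,\tau^{-r}P_j)\arr\sim\Hom_\Lambda(P_j,\tau^{r-\ell(i)}P_{\rho(i)})^*$ by composing Serre duality, the adjunction between $\tau^{-}$ and $\tau$, and Theorem \ref{thm:rf-condition} (every injective is preprojective), and then verifies by naturality that the resulting map $\Pi\to\Pi^*$ is left $\Pi$-linear. That gives the Frobenius property of Theorem \ref{thm:pi-is-frob} directly, upgrades to the graded statement of Proposition \ref{prop:graded-frob}, and---crucially---produces the explicit isomorphism whose bimodule twist is identified as the Nakayama automorphism in Theorem \ref{thm:nak}. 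Your argument uses essentially the same inputs (rep-finiteness placing an injective at the top of each preprojective orbit, plus heredity) but is non-constructive: the dimension count shows each $\Pi e_j$ is injective without exhibiting ${}_\Pi\Pi\cong{}_\Pi\Pi^*$, so it proves self-injectivity cleanly but would not feed into the description of the Frobenius structure that the rest of the paper needs.
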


Like the proof of Iyama and Oppermann, this proof does not rely on Gabriel's classification theorem.  It uses relatively basic technology, avoiding the derived category.  The preprojective algebra is defined using only the module category, and we see it as a virtue that the proof stays within this setting.  We assume knowledge of adjunctions \cite[Chapter 4]{macl} and some Auslander-Reiten theory of hereditary algebras \cite{ars}

Our proof has two key ingredients.  The first is Serre duality: roughly, maps out of a projective module are dual to maps into the corresponding injective module.
The second is a result proved by both Platzeck-Auslander and Gabriel which characterizes representation finite hereditary algebras in terms of whether their injective modules are preprojective.  This result was also crucial in the proofs of self-injectivity by Brenner-Butler-King and Iyama-Oppermann.

Brenner-Butler-King described the Nakayama automorphism of an ADE preprojective algebra using its Gelfand-Ponomarev presentation by quivers with relations.  One might wonder if this automorphism has any interpretation in terms of the representation theory of the original quiver: can we describe the Nakayama automorphism for the Baer-Geigle-Lenzing preprojective algebra?  Our second main result does this.

\begin{thmb}[{Theorem \ref{thm:nak}}]
The Nakayama automorphism of the preprojective algebra is induced by the Nakayama functor of the representation-finite hereditary algebra.
\end{thmb}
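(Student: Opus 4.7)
The plan is to construct an explicit candidate automorphism $\widetilde{\nu}$ of $\Pi$ from the Nakayama functor $\nu = D\Hom_H(-,H)$ and then identify it with the Nakayama automorphism using the Frobenius form from Theorem A. Starting from the Baer-Geigle-Lenzing description $\Pi \cong \bigoplus_{n \geq 0} \Hom_H(H, \tau^{-n} H)$, any morphism $f \in \Hom_H(H, \tau^{-n} H)$ yields $\nu f \in \Hom_H(\nu H, \nu \tau^{-n} H)$. By representation finiteness every injective is preprojective (the Platzeck-Auslander/Gabriel ingredient from Theorem A), so we may fix an isomorphism $\nu H \cong \tau^{-\bullet} H$ splitting into shifts of projective summands under the Nakayama permutation $\pi$. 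The commutation $\nu \tau^{-n} \cong \tau^{-n} \nu$ on preprojective objects then recasts $\nu f$ as a new element $\widetilde{\nu}(f) \in \Pi$. Functoriality of $\nu$ makes $\widetilde{\nu}$ an algebra endomorphism, and its invertibility on the full subcategory of preprojective modules promotes this to an algebra automorphism.

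To identify $\widetilde{\nu}$ with the Nakayama automorphism $\sigma$, I would appeal to the Serre duality isomorphism $D\Hom_H(X, Y) \cong \Hom_H(Y, \nu X)$, natural in both variables, which (as indicated in the introduction) underpins the Frobenius form $\langle -, -\rangle$ on $\Pi$ constructed in the proof of Theorem A. Concretely, the form pairs $f \in \Hom(H, \tau^{-n}H)$ with $g \in \Hom(H, \tau^{-(N-n)}H)$, where $N$ is the socle degree of $\Pi$, via composition followed by the Serre-dual trace. The Nakayama automorphism is characterised (up to inner automorphism) by $\langle ab, 1 \rangle = \langle b\, \sigma(a), 1\rangle$, and the naturality of Serre duality in the first variable with respect to $\nu$, when unpacked in our setup, is exactly this identity with $\sigma = \widetilde{\nu}$.

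The main obstacle, I expect, is in the last step: passing between the categorical statement of Serre duality and the concrete Frobenius pairing, while tracking the action of $\nu$ on both sides simultaneously and in the right degrees. A secondary subtlety is that the isomorphism $\nu H \cong \tau^{-\bullet} H$ is not canonical: different choices differ by a unit in $H$, and this ambiguity corresponds precisely to an inner automorphism of $\Pi$. This is consistent with the Nakayama automorphism being canonical only as an element of $\Out(\Pi)$, but it means the statement must be formulated with that quotient in mind, and the chosen identification must be compatible with composition to preserve the algebra structure.
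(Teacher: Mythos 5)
There is a genuine gap at the heart of your construction of $\widetilde{\nu}$. Over a hereditary algebra the Nakayama functor $\nu=(-)^{\vee *}$ annihilates every module without projective summands: if $M$ has no projective summand then $\Hom_\Lambda(M,\Lambda)=0$, so $\nu M=0$. In particular $\nu(\tau^{-n}\Lambda)$ only sees the projective part of $\tau^{-n}\Lambda$, and for $n\geq 1$ the map $\nu f\colon \nu\Lambda\to\nu(\tau^{-n}\Lambda)$ is simply $0$ on the components you need. Consequently the ``commutation $\nu\tau^{-n}\cong\tau^{-n}\nu$ on preprojective objects'' that you invoke does not exist at the level of the module category (it is a derived-category statement), and the assignment $f\mapsto\nu f$ kills all positive-degree components of $\Pi$, so $\widetilde{\nu}$ as you define it is not an automorphism. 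This is exactly the obstruction flagged in the introduction of the paper, and getting around it is the real content of the proof: instead of applying $\nu$ itself, the paper replaces it by a Serre-duality procedure. Concretely, one uses the graded algebra isomorphism $\Delta\colon\Pi\to\iP$ onto the postinjective algebra (Proposition \ref{prop:PiiP}), constructed by applying Serre duality and the $\tau^{-}\dashv\tau$ adjunction twice; then Theorem \ref{thm:rf-condition} identifies each injective $I_i$ with a preprojective $\tau^{-\ell(i)}P_{\rho(i)}$, and a final adjunction step produces $\sigma(f)\colon P_{\rho(i)}\to\tau^{\ell(i)-\ell(j)-r}P_{\rho(j)}$ (Section \ref{ss:nak}). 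If you want to keep your formulation literally in terms of $\nu$, you must pass to the derived category where $\nu$ is an equivalence; the paper deliberately avoids this and defers it to a sequel.

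Your second step is also thinner than it needs to be, in two respects. First, the identification of $\widetilde{\nu}$ (or $\sigma$) with the Nakayama automorphism is not a one-line consequence of naturality of Serre duality in one variable: the paper's proof of Theorem \ref{thm:nak} is a genuine diagram chase whose key input is the enhanced naturality statement of Corollary \ref{cor:nat+}, itself extracted from the proof that $\Delta$ is multiplicative, and the verification checks compatibility of the right action with the twist, degree by degree. Second, your description of the pairing as matching degree $n$ against degree $N-n$ for a single socle degree $N$ is not correct for the $\tau$-grading: the Gorenstein parameter is a function $\ell\colon S\to\Z_{\geq 0}$ depending on the vertex (Proposition \ref{prop:graded-frob} gives $\Pi\varepsilon_i\cong(\varepsilon_{\rho(i)}\Pi)^*\grsh{-\ell(i)}$, and in the worked example $\ell$ is non-constant), so the graded Frobenius structure does not come from one uniform shift. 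Your closing remark about the choice of isomorphism $\nu\Lambda\cong\tau^{-\bullet}\Lambda$ only affecting $\sigma$ up to inner automorphism is correct and consistent with the paper, but it does not repair the main construction.
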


It may be worth clarifying that, for any self-injective algebra, the Nakayama automorphism induces the Nakayama functor.  The above theorem is saying something different: the Nakayama functor for the hereditary algebra induces the Nakayama automorphism for the preprojective algebra.

This result appears to be new, though I suspect that it will not surprise the experts.  
Once one realizes that the Frobenius isomorphism relies on Serre duality, it makes sense that the right action should be twisted by the Nakayama functor.

Given this intuition, the phrase ``is induced by'' needs to be made precise.  The problem is that the Nakayama functor doesn't behave well on the module category: as our algebra is hereditary it kills all non-projective modules.  There are two possible strategies to overcome this problem.  The first strategy, which we follow in this article, is to replace the Nakayama functor by a procedure which is better behaved on the abelian category.  In order to  show this procedure works in the way one would hope, we are led to consider the postinjective algebra.  People normally ignore this algebra for good reason -- it is isomorphic to the preprojective algebra -- but here we find it useful to keep track of this isomorphism.  The second strategy is to work in a derived setting where the Nakayama functor is an equivalence.  I plan to return to this strategy in a sequel to this paper.

The two results above are stated for representation finite hereditary algebras, but in fact they go through just as well for representation finite $d$-hereditary algebras \cite{io-napr,hio}.  In this paper we work everything out carefully in the hereditary case in a way that generalizes immediately to the higher setting.  This is both to simplify the exposition and to demonstrate the idea that, at least in some cases, we have a good understanding of a property of hereditary algebras precisely when our explanations also work for $d$-hereditary algebras.

\emph{Acknowledgements:} 
This work was supported by the Engineering and Physical Sciences Research Council [grant number EP/G007497/1].  Thanks to Bill Crawley-Boevey and Robert Marsh for encouragement and helpful conversations.  Part of this research was conducted during a visit to King's College London.  Thanks to Konstanze Rietsch and the Mathematics School for their hospitality.

\section{Background}

We fix some notation.
If $f:L\to M$ and $g:M\to N$ are maps in a category, we denote their composition $L\to N$ by $g\circ f$, or simply 
$gf$.

Let $\F$ be a field and let $A$ be an $\F$-algebra.  We denote the category of finitely generated left $A$-modules by $A\mMod$ and write the hom space from $M$ to $N$ as $\Hom_\Lambda(M,N)$.  Let $A^\op$ denote the opposite algebra of $A$.  Then we can identify $A^\op\mMod$ with the category $\Modm A$ of finitely generated right $A$-modules.

Given $M\in A\mMod$, we denote by $M\add$ the full subcategory of objects isomorphic to direct sums of direct summands of $M$.  In particular, $A\add$ is the category of projective $A$-modules.  

\subsection{Frobenius algebras}

Let $(-)^*=\Hom_\F(-,\F)$ denote the usual duality of vector spaces.
Then $(-)^*$ interchanges left and right $A$-modules, and interchanges projective and injective modules.  In particular, the regular left (or right) module $A$ induces a right (or left) module structure on the dual $A^*$ of $A$, and $A^*\add$ is the category of injective $A$-modules.

\begin{definition}
A \emph{Frobenius algebra} is an $\F$-algebra $A$ together with an isomorphism of left modules $\varphi:A\arr\sim A^*$.
\end{definition}
Note that Frobenius algebras are necessarily finite-dimensional, and the definition is left/right symmetric because $\varphi^*$ is an isomorphism of right modules.  We are often sloppy and say ``$A$ is Frobenius'' instead of ``there exists an isomorphism $\varphi$ making $(A,\varphi)$ a Frobenius algebra''.

The Frobenius property is not invariant under Morita equivalence, but it does imply that our algebra is self-injective, which is a Morita-invariant property.  If the regular left $A$-module decomposes as $A=\bigoplus_{i=1}^nP_i^{\oplus d}$ with the modules $P_i$ pairwise nonisomorphic, so each projective appears with the same multiplicity, then $A$ is Frobenius if and only if it is self-injective.

Given a left $B$-module $M$ and an algebra map $\sigma:A\to B$, $M$ obtains the structure of a left $A$-module via $\sigma$.  If $\sigma:A\arr\sim A$ is an automorphism then the new module is called a \emph{twist} and is denoted ${}_\sigma M$.  The same process works for right modules.

The isomorphism $A\arr\sim A^*$ which comes with a Frobenius algebra can be upgraded to a bimodule isomorphism as long as we twist the action of $A$ or $A^*$ on one side.  There is a choice here: on which side of which bimodule should we twist?  Up to taking inverses, it doesn't matter.  In this article, the following definition is convenient:
\begin{definition}
Let $(A,\varphi)$ be a Frobenius algebra let $\sigma:A\to A$ be an algebra automorphism.  If $\varphi:A\arr\sim (A^*)_\sigma$ is an isomorphism of $A\da A$-bimodules then we call $\sigma$ a \emph{Nakayama automorphism} of $A$.
\end{definition}
Nakayama automorphisms are unique up to inner automorphisms of $A$.  We are often sloppy and talk about ``the'' Nakayama automorphism of an algebra.  If $A$ is Frobenius, we say that $\varphi$ and $\sigma$ constitute the \emph{Frobenius structure} of $A$.

Our aim is to describe the Frobenius structure of the preprojective algebra of an arbitrary representation finite hereditary algebra.

\subsection{The Nakayama functor and Serre duality}

Let $\Lambda$ be a finite-dimensional $\F$-algebra and let $(-)^\vee=\Hom_\Lambda(-,\Lambda)$ be duality with respect to the identity bimodule $\Lambda$.  This is a contravariant functor which interchanges left and right $\Lambda$-modules.  In general it is not an equivalence, but if we restrict to the categories of left and right projective modules we do get an equivalence $\Lambda\add\arr\sim \Lambda^\op\add$.

The composition $(-)^{\vee *}$ of the two dualities is a covariant functor, called the Nakayama functor.  We denote it $\nu:\Lambda\mMod\to \Lambda\mMod$.  Again, it is not an equivalence in general, but it does restrict to an equivalence $\Lambda\add\arr\sim \Lambda^*\add$.

The following result is well-known, and is mentioned in Section I.4.6 of Happel's book \cite{hap}.
\begin{theorem}[Serre duality]\label{thm:serre}
There is an isomorphism of vector spaces
\[ \Hom_\Lambda(P,M)\arr{\sim}\Hom_\Lambda(M,\nu P)^* \]
natural in both $P\in \Lambda\add$ and $M\in\Lambda\mMod$.
\end{theorem}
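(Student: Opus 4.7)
My plan is to reduce to the case $P=\Lambda$ and then extend by additivity, using tensor-hom adjunction as the one nontrivial ingredient.

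First I would note that both $\Hom_\Lambda(-,M)$ and $\Hom_\Lambda(M,\nu(-))^*$ are additive (contravariant, then covariant, then contravariant) functors on $\Lambda\add$, so if I can produce a natural isomorphism when the first argument is $\Lambda$ and show it is natural in $\Lambda$-module maps $\Lambda\to\Lambda$, it will propagate to finite direct sums and their direct summands by the Yoneda-type argument, hence to all of $\Lambda\add$.

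For $P=\Lambda$ the left hand side is simply $\Hom_\Lambda(\Lambda,M)\cong M$. On the right hand side, $\nu\Lambda=(\Lambda^\vee)^*=\Lambda^*=\Hom_\F(\Lambda,\F)$, where the left $\Lambda$-module structure comes from the right regular structure on $\Lambda$. Tensor-hom adjunction (between the functors $\Lambda\otimes_\Lambda -$ and $\Hom_\F(\Lambda,-)$, viewed as adjoints between vector spaces and left $\Lambda$-modules) gives
\[ \Hom_\Lambda\bigl(M,\Hom_\F(\Lambda,\F)\bigr)\;\cong\;\Hom_\F(\Lambda\otimes_\Lambda M,\F)\;\cong\; M^*. \]
Dualizing and invoking finite-dimensionality of $M$ (which follows from $\Lambda$ being finite-dimensional and $M$ being finitely generated) yields $M^{**}\cong M$, matching the left hand side. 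Naturality in $M$ is automatic at each step, and naturality in the endomorphism $\Lambda\to\Lambda$ side just unwinds to checking that the adjunction is compatible with the right $\Lambda$-action on $\Lambda$.

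The main obstacle, if any, is bookkeeping: making sure that at each turn the relevant $\Lambda$-actions (left/right, via $\vee$ and via $*$) are used consistently, and that the additive extension from $\Lambda$ to an arbitrary projective $P$ respects both variables simultaneously. No deep input is needed beyond the tensor-hom adjunction and the double-duality $V\cong V^{**}$ for finite-dimensional $V$; in particular the argument uses only formal properties of $\Lambda$ and does not require any hereditary hypothesis, consistent with the way the theorem is quoted from Happel.
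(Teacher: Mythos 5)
Your proof is correct, and it uses the same essential ingredients as the paper's: the tensor-hom adjunction, double duality of finite-dimensional vector spaces, and projectivity of $P$. The difference is how projectivity enters. The paper gives one chain of natural isomorphisms valid for all $P\in\Lambda\add$ simultaneously,
\[ \Hom_\Lambda(P,M) \stackrel\sim\longleftarrow P^\vee\otimes_\Lambda M \larr\sim (P^\vee\otimes_\Lambda M)^{**} \larr\sim \Hom_\Lambda(M,\nu P)^*, \]
where the only nonformal input is that the evaluation map $P^\vee\otimes_\Lambda M\to\Hom_\Lambda(P,M)$ is an isomorphism for finitely generated projective $P$; naturality in both variables is then immediate because every arrow in the chain is natural. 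You instead specialize to $P=\Lambda$, where the two sides become $M$ and $M^{**}$, and then extend over $\Lambda\add$ by additivity and splitting of idempotents. That route is legitimate (it is exactly how one proves the evaluation map is an isomorphism in the first place), but it shifts the burden onto the extension step: you must verify naturality for all maps between free modules, not only endomorphisms of $\Lambda$, and check that the extension to direct summands stays natural in $M$ as well as in $P$; these checks do go through, as your remarks about bookkeeping anticipate. Your observation that no hereditary hypothesis is needed is also correct --- the paper states the result for an arbitrary finite-dimensional algebra. What the paper's formulation buys is that the intermediate object $P^\vee\otimes_\Lambda M$ makes the two-variable naturality visible with no extension argument at all, and it is precisely this naturality that is invoked repeatedly later, in the proofs of Theorems \ref{thm:pi-is-frob} and \ref{thm:nak}.
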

One can prove this as follows:
\[ \Hom_\Lambda(P,M) \stackrel\sim\longleftarrow P^\vee\otimes_\Lambda M\larr\sim (P^\vee\otimes_\Lambda M)^{**} \larr\sim \Hom_\Lambda(M,\nu P)^*. \]

\subsection{Auslander-Reiten theory for hereditary algebras}\label{ss:ar-hered}

For an arbitrary finite-dimensional $\F$-algebra $\Lambda$, we have the the Auslander-Reiten translations
$$\xymatrix {
\Lambda\stmod\ar@/^/[rr]^{\tau} &&
\Lambda\costmod\ar@/^/[ll]^{\tau^{-}}
}.$$
between the stable and costable module categories.
We refer to the book of Auslander, Reiten, and Smal\o{ }\cite{ars} for details.

Let $X$ be the direct sum of (a representative of each isoclass of) all $\Lambda$-modules.  In general this will be an infinite direct sum, but later we will only consider the case where $X$ has finitely many direct summands.  Then $(X/\Lambda)\add$ is the full subcategory of $\Lambda\mMod$ consisting of modules without a nonzero projective direct summand, and $(X/\Lambda^*)\add$ is the full subcategory consisting of modules without a nonzero injective direct summand.

Now let $\Lambda$ be hereditary, i.e., every submodule of a projective module is itself projective.  
Then the inclusion of $(X/\Lambda)\add$ into $X\add$ induces an equivalence of categories
\[ (X/\Lambda)\add \arr\sim \Lambda\stmod \]
and similarly we have an equivalence $(X/\Lambda^*)\add \cong \Lambda\costmod$.  Therefore we obtain the following.
\begin{proposition}\label{prop:AR-quasi-inv}
If $\Lambda$ is hereditary then $\tau$ and $\tau^{-}$ induce quasi-inverse equivalences of categories
$$\xymatrix {
(X/\Lambda)\add\ar@/^/[rr]^{\tau} &&
(X/\Lambda^*)\add\ar@/^/[ll]^{\tau^{-}}
}.$$
\end{proposition}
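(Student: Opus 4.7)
The plan is transport of structure. I would combine the quasi-inverse equivalences $\tau$ and $\tau^-$ between $\Lambda\stmod$ and $\Lambda\costmod$ that are recalled above with the two equivalences
\[
E: (X/\Lambda)\add \arr\sim \Lambda\stmod \quad \text{and} \quad E': (X/\Lambda^*)\add \arr\sim \Lambda\costmod
\]
asserted in the paragraph immediately preceding the proposition. These equivalences are the only place where the hereditary hypothesis enters, and they let us pass freely between the subcategories of modules without projective (respectively injective) summands and the (co)stable categories.

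Concretely, I would fix quasi-inverses $E^{-1}$ and $(E')^{-1}$ and put $\widetilde\tau := (E')^{-1}\tau E$ and $\widetilde{\tau^-} := E^{-1}\tau^- E'$. Each is a composition of equivalences, hence an equivalence between $(X/\Lambda)\add$ and $(X/\Lambda^*)\add$. On objects, $\widetilde\tau$ sends $M \in (X/\Lambda)\add$ to the unique (up to isomorphism) representative of $\tau M$ having no nonzero injective summand; this justifies writing $\tau$ for the lifted functor, as the statement does.

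To see the lifted functors are quasi-inverse, I would chain together the evident natural isomorphisms
\[
\widetilde{\tau^-}\circ\widetilde\tau = E^{-1}\tau^- E'(E')^{-1}\tau E \cong E^{-1}\tau^-\tau E \cong E^{-1}E \cong \id,
\]
applying in turn the quasi-inverse relation for $(E',(E')^{-1})$, the relation $\tau^-\tau \cong \id$ on the stable category, and the quasi-inverse relation for $(E,E^{-1})$; the opposite composition is handled symmetrically. I do not anticipate a genuine obstacle: the substance of the proposition lies in the two equivalences with the (co)stable categories, and once those are granted the conclusion is a purely formal piece of category theory.
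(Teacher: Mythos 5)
Your argument is correct and is essentially the paper's own: the proposition is stated there as an immediate consequence (``Therefore we obtain the following'') of the two equivalences $(X/\Lambda)\add\arr\sim\Lambda\stmod$ and $(X/\Lambda^*)\add\arr\sim\Lambda\costmod$ induced by the inclusions together with the quasi-inverse AR translations on the (co)stable categories, which is exactly your transport-of-structure step. Nothing further is needed.
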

If we set $\tau\Lambda=0$ and $\tau^-\Lambda^*=0$ then both $\tau$ and $\tau^-$ extend to endofunctors of $\Lambda\mMod$.  There is a useful alternative description of these endofunctors which follows from the fact that, for hereditary algebras, the Auslander-Bridger transpose is given by $\Ext_\Lambda^1(-,\Lambda)$ (see, for example, \cite[Proposition 5.1.1]{ah}).
\begin{proposition}\label{prop:AR-adjoint}
If $\Lambda$ is hereditary then we have natural isomorphisms of functors 
\[ \tau^- \cong E\otimes_\Lambda-:\Lambda\mMod\to\Lambda\mMod \]
and
\[ \tau \cong \Hom_\Lambda(E,-):\Lambda\mMod\to\Lambda\mMod \]
where $E$ denotes the $\Lambda\da\Lambda$-bimodule $\Ext^1_\Lambda(\Lambda^*,\Lambda)$.  Therefore $\tau^-$ is left adjoint to $\tau$.
\end{proposition}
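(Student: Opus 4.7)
The plan uses the hint that, for hereditary $\Lambda$, the Auslander–Bridger transpose simplifies to $\Tr(-) \cong \Ext^1_\Lambda(-, \Lambda)$. Combined with the short projective resolution $0 \to P_1 \to P_0 \to M \to 0$ of any module $M$ and application of $(-)^{\vee *}$, this produces the four-term sequence $0 \to \tau M \to \nu P_1 \to \nu P_0 \to \nu M \to 0$, so $\tau M \cong \ker(\nu P_1 \to \nu P_0)$. Dually, from an injective copresentation $0 \to M \to I_0 \to I_1 \to 0$ one extracts $\tau^- M \cong \operatorname{coker}(\nu^{-1} I_0 \to \nu^{-1} I_1)$, where $\nu^{-1} = \Hom_\Lambda(\Lambda^*, -)$ sends injectives to projectives thanks to the hereditary hypothesis. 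These are the workhorse descriptions.

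For $\tau^- \cong E \otimes_\Lambda -$, take a projective resolution $0 \to P_1 \to P_0 \to \Lambda^* \to 0$ and apply $(-)^\vee$ to produce the presentation $P_0^\vee \to P_1^\vee \to E \to 0$ of $E$ as a right $\Lambda$-module. Tensoring over $\Lambda$ with $M$ and inserting the natural identification $P^\vee \otimes_\Lambda M \cong \Hom_\Lambda(P, M)$ for projective $P$ identifies $E \otimes_\Lambda M$ with $\Ext^1_\Lambda(\Lambda^*, M)$. Applying $\nu^{-1}$ to the injective copresentation of $M$ then computes this $\Ext^1$ as the cokernel $\operatorname{coker}(\nu^{-1} I_0 \to \nu^{-1} I_1)$, which is $\tau^- M$.

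The identification $\tau \cong \Hom_\Lambda(E, -)$ follows by uniqueness of right adjoints. Tensor-hom gives $E \otimes_\Lambda - \dashv \Hom_\Lambda(E, -)$, so by the previous paragraph $\Hom_\Lambda(E, -)$ is right adjoint to $\tau^-$. It remains to verify that $\tau$ is also right adjoint to $\tau^-$. Applying $\Hom_\Lambda(-, M)$ to the cokernel description of $\tau^- N$ and inserting Serre duality (Theorem \ref{thm:serre}) in the form $\Hom_\Lambda(\nu^{-1} I_j, M) \cong \Hom_\Lambda(M, I_j)^*$ simplifies $\Hom_\Lambda(\tau^- N, M)$ to $\Ext^1_\Lambda(M, N)^*$. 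A symmetric calculation — applying $\Hom_\Lambda(N, -)$ to the kernel description of $\tau M$ and using $\Hom_\Lambda(N, \nu P_j) \cong \Hom_\Lambda(P_j, N)^*$ — produces the same $\Ext^1_\Lambda(M, N)^*$ for $\Hom_\Lambda(N, \tau M)$, giving the adjunction $\tau^- \dashv \tau$.

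The main obstacle is the naturality bookkeeping: each ingredient — tensor-hom for projectives, Serre duality, and the passage between kernels and cokernels under $(-)^*$ — has to behave functorially in the correct variable for the final adjunction isomorphisms to be natural in both. No conceptually new tool beyond the hint and Serre duality is needed.
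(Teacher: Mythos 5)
Your proposal is correct, and it is in the spirit of what the paper does: the paper offers no written proof of this proposition, deriving it in one line from the same key fact you start from, namely that for hereditary algebras the transpose is $\Ext^1_\Lambda(-,\Lambda)$ (cited to [AH06, Proposition 5.1.1]), with the adjunction then coming from tensor--hom. Your elaboration fills in exactly the missing details: the identification $E\otimes_\Lambda M\cong\Ext^1_\Lambda(\Lambda^*,M)\cong\operatorname{coker}(\nu^{-1}I_0\to\nu^{-1}I_1)\cong\tau^-M$ via the $(-)^\vee$-dual of a length-one resolution of $\Lambda^*$ is sound, and your organizational choice of getting $\tau\cong\Hom_\Lambda(E,-)$ by uniqueness of right adjoints (after checking $\Hom_\Lambda(\tau^-N,M)\cong\Ext^1_\Lambda(M,N)^*\cong\Hom_\Lambda(N,\tau M)$ with Serre duality) is a legitimate alternative to computing $\tau\cong\Ext^1_\Lambda(-,\Lambda)^*\cong\Hom_\Lambda(E,-)$ directly; note that your Serre-duality computation secretly re-proves the Auslander--Reiten formula, which is fine here because for hereditary $\Lambda$ the stable and ordinary hom spaces out of $\tau^-N$ (resp.\ into $\tau M$) coincide, as $\tau^-N$ has no projective summands and images of projectives (resp.\ injectives) split off. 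Two small corrections: $\nu^{-1}=\Hom_\Lambda(\Lambda^*,-)$ sends injectives to projectives for any finite-dimensional algebra, not ``thanks to the hereditary hypothesis''---heredity is what gives the length-one (co)resolutions and $\Tr\cong\Ext^1_\Lambda(-,\Lambda)$; and the worry about minimality of presentations disappears precisely because your kernel/cokernel descriptions are phrased through $\Ext^1$, which is resolution-independent, so the ``naturality bookkeeping'' you flag is routine (balancedness of $\Ext$ plus naturality of Serre duality) and does not hide a gap.
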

Modules of the form $\tau^{-r}P$, for $r\geq0$ and $P\in\Lambda\add$, are called \emph{preprojective}.  Dually, modules of the form $\tau^{r}I$, for $r\geq0$ and $I\in\Lambda^*\add$, are called \emph{postinjective} (or, confusingly, \emph{preinjective}).

\subsection{Preprojective algebras}

The following definition was given by Baer, Geigle, and Lenzing \cite{bgl}.
\begin{definition}\label{def:preproj}
Given a hereditary algebra $\Lambda$, its \emph{preprojective algebra} is
$$\Pi=\bigoplus_{r\geq0}\Hom_\Lambda(\Lambda,\tau^{-r}\Lambda)$$
with the multiplication of $f:\Lambda\to\tau^{-r}\Lambda$ and $g:\Lambda\to\tau^{-s}\Lambda$ defined as
\[ gf = \tau^{-r}g\circ f: \Lambda\to\tau^{-r-s}\Lambda. \]
\end{definition}

By construction, on restriction to a left $\Lambda$-module, $\Pi$ is isomorphic to the direct sum $\bigoplus_{r\geq0}\tau^{-r}\Lambda$ of all preprojective modules.  Ringel \cite{rin} and Crawley-Boevey \cite{cb-quiv} both showed that, up to sign, this matches the earlier generators and relations definition of the preprojective algebra of a quiver due to Gelfand and Ponomarev \cite{gp}.

Note that $\Pi$ contains $\Hom_\Lambda(\Lambda,\Lambda)\cong\Lambda^\op$ as a subalgebra.

\subsection{A criterion for finite representation type}\label{ss:finitetype}

We now consider finite-dimensional hereditary algebras $\Lambda$ of finite representation type, i.e., with only finitely many isomorphism classes of indecomposable modules.  These are the hereditary algebras for which $X\in\Lambda\mMod$, where $X$ is as in Section \ref{ss:ar-hered}. 

We will make use of the following Auslander-Reiten theoretic characterization of hereditary algebras of finite representation type due to Auslander and Platzeck \cite[Theorem 1.7]{ap} (or see \cite[Proposition VIII.1.13]{ars}) and also, in the case of quivers, to Gabriel \cite[Proposition 6.4]{gab-ar}.
\begin{theorem}\label{thm:rf-condition}
A hereditary algebra $\Lambda$ is of finite representation type if and only if every injective module is preprojective.
\end{theorem}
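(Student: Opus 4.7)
I would prove the two implications separately; the main work lies in the reverse direction. For the forward direction, assume $\Lambda$ is representation finite and let $I$ be any indecomposable injective module. Form the sequence $I, \tau I, \tau^2 I, \ldots$. By Proposition \ref{prop:AR-quasi-inv} each nonzero term is indecomposable, and distinct nonzero terms are pairwise nonisomorphic (otherwise applying $\tau^-$ repeatedly would give a contradiction). Finite representation type then forces $\tau^n I = 0$ for some minimal $n\geq 1$, and since $\tau$ kills projectives by the extension convention, $\tau^{n-1}I$ must be an indecomposable projective $P$. Hence $I\cong\tau^{-(n-1)}P$ is preprojective.

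For the reverse direction, assume every indecomposable injective is preprojective, so $I_i\cong\tau^{-r_i}P_{\pi(i)}$ for each indecomposable injective $I_i$, some $r_i\geq 0$, and some indecomposable projective $P_{\pi(i)}$. I would first show that $\pi$ is injective: if $\pi(i)=\pi(j)$ with $r_i\leq r_j$, then $\tau^{-(r_j-r_i)}I_i\cong I_j$, but $\tau^- I_i=0$ by the extension convention, forcing $r_i=r_j$ and hence $I_i\cong I_j$. Since both indecomposable injectives and indecomposable projectives are indexed by the simple $\Lambda$-modules, $\pi$ is in fact a bijection, and the set $\mathcal{P}=\{\tau^{-s}P_j:0\leq s\leq r_{\pi^{-1}(j)}\}$ of indecomposable preprojectives is therefore finite.

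The main obstacle is then the final step: showing that every indecomposable $M$ lies in $\mathcal{P}$. My approach is to embed $M$ into its injective envelope $J$, which is a finite direct sum of indecomposable injectives and hence preprojective by hypothesis. Since $\Lambda$ is hereditary, Proposition \ref{prop:AR-adjoint} identifies $\tau$ with the left exact functor $\Hom_\Lambda(E,-)$, so the inclusion $M\hookrightarrow J$ yields inclusions $\tau^k M\hookrightarrow\tau^k J$ for all $k\geq 0$. Since $J\in\mathcal{P}$, we have $\tau^k J=0$ for large $k$, whence $\tau^k M=0$ as well. Taking the minimal such $k$, $\tau^{k-1}M$ is an indecomposable projective $P'$, and unwinding through Proposition \ref{prop:AR-quasi-inv} (after verifying that $\tau M,\ldots,\tau^{k-1}M$ are all non-injective, so that $\tau^-$ is the genuine inverse of $\tau$ at each stage) identifies $M\cong\tau^{-(k-1)}P'$ as preprojective. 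The bookkeeping around injective and projective summands in this unwinding is the technical heart of the classical argument of Auslander-Platzeck, and is what I expect to be the most delicate part of writing up the proof.
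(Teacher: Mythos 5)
The paper does not actually prove Theorem \ref{thm:rf-condition}: it is quoted as background and attributed to Auslander--Platzeck and, for quivers, to Gabriel, so there is no in-paper argument to compare yours against. Judged on its own, your proof is correct and essentially the classical argument, with the pleasant feature that the key step is exactly the paper's Proposition \ref{prop:AR-adjoint}: since $\tau\cong\Hom_\Lambda(E,-)$ is left exact, the inclusion of $M$ into its injective envelope $J$ gives $\tau^kM\into\tau^kJ$, and $\tau^kJ=0$ for large $k$ because every indecomposable injective summand of $J$ is preprojective; this replaces the more combinatorial bookkeeping in the standard references by a one-line functorial observation. Three small points to make explicit in a write-up, none of them gaps. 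First, to know that your finite set $\mathcal{P}$ really contains every nonzero indecomposable preprojective you should record that $\tau^{-r_{\pi^{-1}(j)}}P_j\cong I_{\pi^{-1}(j)}$ is injective, so $\tau^{-s}P_j=0$ for $s>r_{\pi^{-1}(j)}$; this is precisely where surjectivity of $\pi$ enters, since without it some $\tau^-$-orbit of a projective could fail to terminate. Second, in the final unwinding the condition you need at each stage is that $\tau^{j}M$ is non-projective for $j\leq k-2$, which is immediate from minimality of $k$, because $\tau^-\tau\cong\id$ holds on indecomposable non-projectives (Proposition \ref{prop:AR-quasi-inv}); the non-injectivity of $\tau M,\dots,\tau^{k-1}M$ that you propose to verify is automatic (modules in the image of $\tau$ have no injective summands) but is the hypothesis for the other composite $\tau\tau^-$, not the one doing the work here. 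The same remark applies in your forward direction, where the statement that $\tau^{n-1}I$ must be projective uses that $\tau$ kills no nonzero non-projective indecomposable, again contained in Proposition \ref{prop:AR-quasi-inv}. Third, the hypothesis gives $I_i\cong\tau^{-r}P$ with $P$ projective but a priori decomposable; since $\tau^{-r}$ sends indecomposables to indecomposables or zero, you may take $P$ indecomposable, which is what your definition of $\pi$ tacitly assumes.
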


Let $S$ index the isoclasses of indecomposable projective left $\Lambda$-modules, so $A=\bigoplus_{i\in S}P_i^{\oplus d_i}$.  Then we have indecomposable projective right modules ${}_iP=P_i^\vee$.  Dually, we obtain indecomposable injective modules $I_i={}_iP^*$ and ${}_iI=P_i^*$.

Theorem \ref{thm:rf-condition} implies that if $\Lambda$ is of finite representation type then
there exists a permutation $\rho:S\arr\sim S$ and a function $\ell:S\to \Z_{\geq0}$ 
such that, for every $i\in S$, 
$$I_{i} \cong \tau^{-\ell(i)}P_{\rho(i)}.$$

\section{The Frobenius structure}
\subsection{The Frobenius isomorphism}

The following result was folklore before a careful proof was written down for $\Lambda$ the path algebra of a quiver by Brenner, Butler, and King \cite[Theorem 4.8]{bbk}.  
\begin{theorem}\label{thm:pi-is-frob}
If $\Lambda$ is basic and representation finite then $\Pi$ is Frobenius.
\end{theorem}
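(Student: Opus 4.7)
The plan is to construct a left $\Pi$-module isomorphism $\varphi:\Pi \arr\sim \Pi^*$. Because $\Pi$ is finite dimensional and $\Pi$ is a rank-one free left $\Pi$-module, every left $\Pi$-linear map $\Pi\to\Pi^*$ has the form $a\mapsto a\lambda$ (with $(a\lambda)(b):=\lambda(ba)$) for a unique $\lambda=\varphi(1)\in\Pi^*$, and any $\lambda$ defines such a map; so the task reduces to producing one $\lambda$ for which this map is a bijection.  My approach is to first identify $\Pi^*$ degree by degree, verifying that the dimensions agree, and then build $\lambda$ so that non-degeneracy falls out of Serre duality.

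For the identification, apply Serre duality (Theorem \ref{thm:serre}) in each degree with $P=\Lambda$:
\[ \Pi_r^* = \Hom_\Lambda(\Lambda,\tau^{-r}\Lambda)^* \cong \Hom_\Lambda(\tau^{-r}\Lambda,\nu\Lambda) = \Hom_\Lambda(\tau^{-r}\Lambda,\Lambda^*). \]
Iterating the adjunction $\tau^-\dashv\tau$ of Proposition \ref{prop:AR-adjoint} rewrites this as $\Hom_\Lambda(\Lambda,\tau^r\Lambda^*)$.  Since $\Lambda$ is basic and representation finite, Theorem \ref{thm:rf-condition} gives $\Lambda^*\cong\bigoplus_i\tau^{-\ell(i)}P_{\rho(i)}$ for a permutation $\rho$, whence $\tau^r\Lambda^*\cong\bigoplus_i\tau^{-(\ell(i)-r)}P_{\rho(i)}$ (with vanishing summands when $\ell(i)<r$).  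Summing over $r$, substituting $s=\ell(i)-r$, and setting $j=\rho(i)$ yields
\[ \Pi^*\cong\bigoplus_j\bigoplus_{s=0}^{L(j)}\Hom_\Lambda(\Lambda,\tau^{-s}P_j), \]
where $L(j)=\ell(\rho^{-1}(j))$, matching the decomposition of $\Pi$ summand for summand.

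To write down $\lambda$, fix once and for all isomorphisms $\Lambda^*\cong\bigoplus_i I_i$ and $I_i\cong\tau^{-\ell(i)}P_{\rho(i)}$, and define $\lambda_r\in\Pi_r^*$ to correspond, under the Serre duality identification, to the map $\alpha_r:\tau^{-r}\Lambda\to\Lambda^*$ which on each summand $\tau^{-r}P_j$ is the chosen identification $\tau^{-L(j)}P_j=I_{\rho^{-1}(j)}\hookrightarrow\Lambda^*$ when $r=L(j)$, and zero otherwise.  Set $\lambda=\sum_r\lambda_r\in\Pi^*$.

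The main obstacle is non-degeneracy of the pairing $(a,b)\mapsto\lambda(ba)$.  Given $0\ne a\in\Pi_r$, I want $b\in\Pi_s$ with $\operatorname{tr}(\alpha_{r+s}\circ\tau^{-r}b\circ a)\ne 0$.  By Serre duality applied to $a$ there is some $h:\tau^{-r}\Lambda\to\Lambda^*$ pairing nontrivially with $a$, so the task is to realize every such $h$ in the form $\alpha_{r+s}\circ\tau^{-r}b$ for some $s$ and $b\in\Pi_s$.  Decomposing $\Lambda^*=\bigoplus_kI_{\rho^{-1}(k)}$ and choosing $s=L(k)-r$ for each $k$ with $L(k)\ge r$, the required map is $\alpha_{L(k)}\circ\tau^{-r}b$ with $b\in\Pi_{L(k)-r}$, and the natural map $\tau^{-r}$ gives a bijection $\Hom_\Lambda(\Lambda,\tau^{-(L(k)-r)}P_k)\arr\sim\Hom_\Lambda(\tau^{-r}\Lambda,I_{\rho^{-1}(k)})$ via the same adjunction used above.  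Summing the images over $k$ exhausts $\Hom_\Lambda(\tau^{-r}\Lambda,\Lambda^*)$, so $h$ is achieved and non-degeneracy follows.
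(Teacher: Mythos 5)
Your route is genuinely different from the paper's: rather than building an explicit isomorphism $\Pi\arr\sim\Pi^*$ out of the pieces $\Hom_\Lambda(P_i,\tau^{-r}P_j)$ and then checking left $\Pi$-linearity by naturality diagrams, you write down a Frobenius form $\lambda\in\Pi^*$ supported on the top degree of each $\tau^-$-orbit and reduce everything to nondegeneracy of the pairing $(a,b)\mapsto\lambda(ba)$, left-linearity of $a\mapsto a\lambda$ being automatic. The inputs are the same (Serre duality, the $\tau^-$/$\tau$ adjunction together with Proposition \ref{prop:AR-quasi-inv}, and Theorem \ref{thm:rf-condition}); the naturality that the paper spends its diagram chases on is concentrated, in your version, in the single identity $\lambda_n(c)=\operatorname{tr}(\alpha_n\circ c)$, i.e.\ that the Serre pairing is composition followed by the canonical form on $\Hom_\Lambda(\Lambda,\Lambda^*)$ --- you should state this, since it is exactly naturality of Theorem \ref{thm:serre} in the second variable. (The dimension count at the start is not logically needed: injectivity of $a\mapsto a\lambda$ suffices, as $\dim\Pi^*=\dim\Pi$ trivially.)

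There are, however, two genuine gaps in the nondegeneracy step. First, you only treat homogeneous $a\in\Pi_r$. Since $\lambda=\sum_r\lambda_r$ is not homogeneous, the kernel of $a\mapsto a\lambda$ is a left ideal but not visibly a graded subspace, so nonvanishing on homogeneous elements does not yet give injectivity on all of $\Pi$. The repair is easy with your $\lambda$: for $a=\sum_r a_r\neq 0$ take $r_0$ maximal with $a_{r_0}\neq 0$ and choose, as your homogeneous argument allows, $b\in\Hom_\Lambda(\Lambda,\tau^{-(L(k)-r_0)}P_k)$ concentrated in a single column $k$ with $\lambda(ba_{r_0})\neq 0$; for $r<r_0$ the product $ba_r$ lies in $\Hom_\Lambda(\Lambda,\tau^{-(r+L(k)-r_0)}P_k)$ with $r+L(k)-r_0<L(k)$, where $\alpha$ vanishes by construction, so $\lambda(ba)=\lambda(ba_{r_0})\neq 0$. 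Second, the claim that the images for those $k$ with $L(k)\geq r$ exhaust $\Hom_\Lambda(\tau^{-r}\Lambda,\Lambda^*)$ silently uses that $\Hom_\Lambda(\tau^{-r}\Lambda,\tau^{-L(k)}P_k)=0$ when $L(k)<r$; this is true (the adjunction reduces it to a map from an indecomposable non-projective preprojective module to the projective $P_k$, which must vanish because $\Lambda$ is hereditary), but it has to be proved, since it is exactly what rules out components of $h$ that your construction cannot reach. Finally, identifying the adjunction bijection with $b\mapsto\tau^{-r}b$ uses that the unit of the adjunction is an isomorphism on $\tau^{-t}P_k$ for $t<L(k)$, i.e.\ the compatibility of Propositions \ref{prop:AR-quasi-inv} and \ref{prop:AR-adjoint}; the paper relies on the same compatibility, so this is acceptable, but it deserves a sentence.
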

\begin{proof}
As $\Lambda=\bigoplus_{i\in S}P_i$ is basic we can decompose $\Pi$ as a vector space in the following way:
\[ \Pi = \bigoplus_{i,j\in S}\bigoplus_{r=0}^{\ell(j)}\Hom_\Lambda(P_i,\tau^{-r}P_j). \]

For each pair $i,j\in S$ and $0\leq r\leq \ell(j)$, consider the chain of isomorphisms
\[ \varphi_{ij}^r:
\Hom_\Lambda(P_i,\tau^{-r}P_j) \larr\sim
\Hom_\Lambda(\tau^{-r}P_j,I_i)^* \larr\sim
\Hom_\Lambda(P_j,\tau^rI_i)^* \larr\sim
\Hom_\Lambda(P_j,\tau^{r-\ell(i)}P_{\rho(i)})^*
\]
The first comes from Serrre duality, the second from the adjunction between $\tau^-$ and $\tau$, and the third from the fact that every injective is preprojective.  So, taking the direct sum over $i$, $j$, and $r$, we have found an isomorphism of vector spaces $\Pi\cong\Pi^*$.  It remains to show that this isomorphism preserves the left $\Pi$-module structures.

Given $f:P_i\to\tau^{-r}P_j$ and $g:P_j\to\tau^{-s}P_k$, we want to show that $g\varphi_{ij}^r(f)=\varphi_{ik}^{r+s}(gf)$, so 
we need to show that the following diagram commutes:
$$\xymatrix{
\Hom_\Lambda(P_i,\tau^{-r}P_j)\ar[r]^(0.4)\sim\ar[d] &\Hom_\Lambda(P_j,\tau^{r-\ell(i)}P_{\rho(i)})^*\ar[d]\\
\Hom_\Lambda(P_i,\tau^{-r-s}P_k)\ar[r]^(0.4)\sim &\Hom_\Lambda(P_k,\tau^{r+s-\ell(i)}P_{\rho(i)})^*
}
$$
First, we note that the square
$$\xymatrix{
\Hom_\Lambda(P_i,\tau^{-r}P_j)\ar[r]^\sim\ar[d] &\Hom_\Lambda(\tau^{-r}P_j,I_i)^*\ar[d]\\
\Hom_\Lambda(P_i,\tau^{-r-s}P_k)\ar[r]^\sim &\Hom_\Lambda(\tau^{-r-s}P_k,I_i)^*
}
$$
commutes by the naturality of Serre duality, so then we can check that the rest of the square commutes before dualizing the hom spaces, i.e., we only need to check that the following diagram commutes:
\[
\xymatrix @C=15pt{
\Hom_\Lambda(\tau^{-r}P_j,I_i)\ar[r]^\sim &\Hom_\Lambda(P_j,\tau^rI_i)\ar[r]^(0.4)\sim &\Hom_\Lambda(P_j,\tau^{r-\ell(i)}P_{\rho(i)})\\
\Hom_\Lambda(\tau^{-r-s}P_k,I_i)\ar[r]^\sim\ar[u] &\Hom_\Lambda(P_k,\tau^{r+s}I_i)\ar[r]^(0.4)\sim &\Hom_\Lambda(P_k,\tau^{r+s-\ell(i)}P_{\rho(i)})\ar[u]
}
\]
The rightmost vertical map is given by first applying $\tau^{-s}$ and then precomposing with $g$.  By Proposition \ref{prop:AR-quasi-inv}, $\tau^{-s}\tau^{r+s-\ell(i)}P_{\rho(i)}\cong\tau^{r-\ell(i)}P_{\rho(i)}$.  Thus the rightmost vertical map is the composition of the following adjunction and precomposition:
\[  \Hom_\Lambda(P_k,\tau^{r+s-\ell(i)}P_{\rho(i)}) \larr\sim 
\Hom_\Lambda(\tau^{-s}P_k,\tau^{r-\ell(\rho(i))}P_{\rho(i)}) \longrightarrow \Hom_\Lambda(P_j,\tau^{r-\ell(\rho(i))}P_{\rho(i)}). \]

We break our diagram into parts, and will show that each part commutes.
\[
\xymatrix { 
\Hom_\Lambda(\tau^{-r}P_j,I_i)\ar[r]^-\sim 
&\Hom_\Lambda(P_j,\tau^rI_i)\ar[r]^-\sim 
&\Hom_\Lambda(P_j,\tau^{r-\ell(i)}P_{\rho(i)})\\
\Hom_\Lambda(\tau^{-r-s}P_k,I_i)\ar[u]\ar[r]^-\sim
&\Hom_\Lambda(\tau^{-s}P_k,\tau^{r}I_i)\ar[u]\ar[r]^-\sim\ar[d]^-\sim
&\Hom_\Lambda(\tau^{-s}P_k,\tau^{r-\ell(i)}P_{\rho(i)})\ar[u]\ar[d]^-\sim\\
\Hom_\Lambda(\tau^{-r-s}P_k,I_i)\ar[r]^-\sim\ar@{=}[u]
&\Hom_\Lambda(P_k,\tau^{r+s}I_i)\ar[r]^-\sim
&\Hom_\Lambda(P_k,\tau^{r+s-\ell(i)}P_{\rho(i)})
}
\]
The top left and bottom right squares commute by naturality of adjunctions.  The bottom left square commutes by definition.  The top right square commutes by bifunctoriality of $\Hom_\Lambda(-,-)$.

So we have shown that our isomorphism respects the left $\Pi$-module structure, and we are done.
\end{proof}

\subsection{The graded structure}

The preprojective algebra has a natural structure of a nonnegatively graded algebra: maps $\Lambda\to\tau^{-r}\Lambda$ are in degree $r$.  Therefore the regular left and right modules also have a natural graded structure.

Let $M=\bigoplus_{r\in\Z}M_r$ be a graded (left or right) module.  We extend $(-)^*$ to a duality on graded modules by $(M^*)_r=(M_{-r})^*$.  We shift gradings by $(M\grsh{n})_r=M_{r+n}$.  With these conventions, $(M\grsh{n})^*=M^*\grsh{-n}$, and homogeneous maps $f:M\to N$ of ``degree $k$'', i.e., such that $f(M_r)\subseteq N_{r+k}$, correspond to degree $0$ maps $f:M\to N\grsh{k}$.

Our Frobenius isomorphism of Theorem \ref{thm:pi-is-frob} can be upgraded to a graded isomorphism in the following straightforward manner.  Let $\varepsilon_i:P_i\arr= P_i$ denote the identity map on $P_i$.  
Then $\varepsilon_i$ is an idempotent in $\Pi$.

Recall our function $\ell:S\to\Z_{\geq0}$ from Section \ref{ss:finitetype}.
\begin{proposition}\label{prop:graded-frob}
For all $i\in S$, we have an isomorphism $\Pi \varepsilon_i\cong (\varepsilon_{\rho(i)} \Pi)^*\grsh{-\ell(i)}$ of graded left $\Pi$-modules.
\end{proposition}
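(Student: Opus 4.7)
The plan is to take the Frobenius isomorphism $\varphi:\Pi\arr\sim\Pi^*$ built in Theorem \ref{thm:pi-is-frob} and restrict it along the idempotents $\varepsilon_i,\varepsilon_{\rho(i)}$, then track how the component isomorphisms $\varphi_{ij}^r$ shift gradings. Since $\varphi$ is a left $\Pi$-module map and each $\varepsilon_j$ is homogeneous of degree $0$, restriction to $\Pi\varepsilon_i$ automatically produces a graded left $\Pi$-submodule map; the only real work is identifying its image and the degree shift.

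First I would write the orthogonal idempotent decompositions
\[ \Pi\varepsilon_i=\bigoplus_{j\in S}\bigoplus_{r\geq 0}\Hom_\Lambda(P_i,\tau^{-r}P_j), \qquad \varepsilon_{\rho(i)}\Pi=\bigoplus_{k\in S}\bigoplus_{s\geq 0}\Hom_\Lambda(P_k,\tau^{-s}P_{\rho(i)}), \]
with the summands indexed by $r$ and $s$ homogeneous of those degrees. Under the graded dual convention $(M^*)_n=(M_{-n})^*$ and the shift $\grsh{-\ell(i)}$, the degree $r$ part of $(\varepsilon_{\rho(i)}\Pi)^*\grsh{-\ell(i)}$ is
\[ \bigoplus_{k\in S}\Hom_\Lambda(P_k,\tau^{-(\ell(i)-r)}P_{\rho(i)})^* = \bigoplus_{k\in S}\Hom_\Lambda(P_k,\tau^{r-\ell(i)}P_{\rho(i)})^*, \]
which is exactly the codomain appearing in the $\varphi_{ij}^r$ (with $k=j$).

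Next I would observe that the Frobenius map $\varphi$, restricted to the degree $r$ piece of $\Pi\varepsilon_i$, is by construction the direct sum over $j$ of the isomorphisms $\varphi_{ij}^r$, hence carries the degree $r$ piece of $\Pi\varepsilon_i$ isomorphically onto the degree $r$ piece of $(\varepsilon_{\rho(i)}\Pi)^*\grsh{-\ell(i)}$ identified above. Summing over $r$ yields a degree-preserving isomorphism of left $\Pi$-modules $\Pi\varepsilon_i\arr\sim (\varepsilon_{\rho(i)}\Pi)^*\grsh{-\ell(i)}$, as required.

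The only subtle point I anticipate is confirming that the image of $\varphi|_{\Pi\varepsilon_i}$ exhausts all of $(\varepsilon_{\rho(i)}\Pi)^*$ rather than sitting inside a proper subspace. This reduces to checking that $\varepsilon_{\rho(i)}\Pi$ is supported precisely in degrees $0,1,\dots,\ell(i)$, which follows because $\tau^{-\ell(i)}P_{\rho(i)}\cong I_i$ is injective and therefore $\tau^{-s}P_{\rho(i)}=0$ for $s>\ell(i)$. The bijection $s=\ell(i)-r$ between the supports of $\Pi\varepsilon_i$ and $\varepsilon_{\rho(i)}\Pi$ is exactly what the shift $\grsh{-\ell(i)}$ encodes, and the preservation of the left $\Pi$-action is inherited from Theorem \ref{thm:pi-is-frob}.
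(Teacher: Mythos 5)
Your proposal is correct and follows essentially the same route as the paper: restrict the Frobenius isomorphism to $\Pi\varepsilon_i$, observe that its components $\varphi_{ij}^r$ land in the duals of the components of $\varepsilon_{\rho(i)}\Pi$, and check that each $\varphi_{ij}^r$ is homogeneous of degree $-\ell(i)$, so their sum is a graded left $\Pi$-module isomorphism onto $(\varepsilon_{\rho(i)}\Pi)^*\grsh{-\ell(i)}$. Your explicit verification that $\varepsilon_{\rho(i)}\Pi$ is concentrated in degrees $0,\dots,\ell(i)$ (since $\tau^{-\ell(i)}P_{\rho(i)}\cong I_i$ is killed by $\tau^-$) is a point the paper leaves implicit, and it is a welcome addition rather than a departure.
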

\begin{proof}
First, note that
\[\Pi \varepsilon_i = \bigoplus_{j\in S}\bigoplus_{r=0}^{\ell(j)}\Hom_\Lambda(P_i,\tau^{-r}P_j)\]
and
\[ \varepsilon_k \Pi = \bigoplus_{j\in S}\bigoplus_{r=0}^{\ell(k)}\Hom_\Lambda(P_j,\tau^{-r}P_k).\]
Recall our maps
\[ \varphi_{ij}^r:
\Hom_\Lambda(P_i,\tau^{-r}P_j) \larr\sim
\Hom_\Lambda(P_j,\tau^{r-\ell(i)}P_{\rho(i)})^*.
\]
So elements in $(\Pi \varepsilon_i)_r$ are sent to elements in the dual of $(\varepsilon_{\rho(i)}\Pi)_{-r+\ell(i)}$, i.e., $\varphi_{ij}^r$ sends degree $r$ elements of $\Pi \varepsilon_i$ to degree $r-\ell(i)$ elements of $(\varepsilon_{\rho(i)}\Pi)^*$.  So $\varphi_{ij}^r$  is a homogeneous map of degree $-\ell(i)$ and thus
\[ \varphi_i=\bigoplus_{j,r}\varphi_{ij}^r \]
is a graded map $\Pi \varepsilon_i\to (\varepsilon_{\rho(i)} \Pi)^*\grsh{-\ell(i)}$.
\end{proof}
We say that $\Pi$ is a graded Frobenius algebra with Gorenstein function $\ell$.  
Informally, we write 
\[{}_\Pi\Pi\cong {}_\Pi\Pi^*\grsh{\ell}. \]

\subsection{Postinjective algebras}

There is an obvious definition dual to the preprojective algebra:
\begin{definition}\label{def:postinj}
Given a hereditary algebra $\Lambda$, its \emph{postinjective algebra} is
$$ \iP =\bigoplus_{r\geq0}\Hom_\Lambda(\tau^{r}\Lambda^*,\Lambda^*)$$
with the multiplication of $f:\tau^{r}\Lambda^*\to\Lambda^*$ and $g:\tau^{s}\Lambda^*\to\Lambda^*$ defined as
\[ gf = g\circ \tau^sf: \tau^{r+s}\Lambda\to\Lambda. \]
$ \iP$ is nonnegatively graded with maps $\tau^{r}\Lambda^*\to\Lambda^*$ in degree $r$.
\end{definition}
This gives us nothing new, but it will be useful to see precisely how this gives us nothing new.  We go from $\Pi$ to $\iP$ using Serre duality and our adjunctions, as follows:
\[
\Delta_{ij}^r:\Hom_\Lambda(P_i,\tau^{-r}P_j)\arr\sim\Hom_\Lambda(\tau^{-r}P_j,I_i)^*\arr\sim\Hom_\Lambda(P_j, \tau^{r}I_i)^*\arr\sim\Hom_\Lambda(\tau^{r}I_i,I_j)^{**}\arr\sim\Hom_\Lambda(\tau^{r}I_i,I_j)
\]
\begin{proposition}\label{prop:PiiP}
The map $\Delta:\Pi\to\iP$ sending $f:P_i\to\tau^{-r}P_j$ to $\Delta(f):\tau^rI_i\to I_j$ is an isomorphism of graded algebras.
\end{proposition}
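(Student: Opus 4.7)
The map $\Delta = \bigoplus_{i,j,r}\Delta_{ij}^r$ is manifestly a graded vector-space isomorphism: each of the four ingredients (Serre duality, the $(\tau^{-},\tau)$-adjunction, Serre duality applied in the argument $P_j$, and the double-dual isomorphism for finite-dimensional vector spaces) is bijective, and the summand $\Hom_\Lambda(P_i,\tau^{-r}P_j)$ in degree $r$ of $\Pi$ is sent to the summand $\Hom_\Lambda(\tau^r I_i,I_j)$ in degree $r$ of $\iP$. So the entire content of the proposition is multiplicativity.

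The plan is to mimic the commutative-diagram argument used in the proof of Theorem \ref{thm:pi-is-frob}. Given composable morphisms $f:P_i\to\tau^{-r}P_j$ and $g:P_j\to\tau^{-s}P_k$, the product in $\Pi$ is $g\cdot f = \tau^{-r}g\circ f:P_i\to\tau^{-r-s}P_k$, while the product in $\iP$ of $\Delta(f):\tau^r I_i\to I_j$ and $\Delta(g):\tau^s I_j\to I_k$ is $\Delta(g)\circ\tau^s\Delta(f):\tau^{r+s}I_i\to I_k$, so I must show $\Delta_{ik}^{r+s}(g\cdot f) = \Delta_{jk}^s(g)\circ\tau^s\Delta_{ij}^r(f)$. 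The first two stages of $\Delta_{ij}^r$ coincide with the first two stages of the Frobenius isomorphism $\varphi_{ij}^r$ from the proof of Theorem \ref{thm:pi-is-frob}, so the diagrammatic work done there applies essentially verbatim to show that those two stages intertwine the multiplications. It then remains to add two more naturality squares at the bottom of the diagram: one for the Serre-duality step in the argument $P_j$, and one for the double-dual step. The first of these squares commutes by the naturality of Serre duality (now in the second argument) combined with the bifunctoriality of $\Hom_\Lambda(-,-)$, and the second by the naturality of the evaluation isomorphism between a finite-dimensional vector space and its double dual.

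The main obstacle is the usual bookkeeping: because one Serre-duality step appears outside a dual and another inside a dual, one has to be careful that the left $\Pi$-action on the source translates into the correct left $\iP$-action on the target, rather than into a right action or an action twisted by some automorphism. A helpful sanity check is that under the identification $\tau^r I_i \cong \tau^{r-\ell(i)}P_{\rho(i)}$ from Section \ref{ss:finitetype}, post-composing $\Delta_{ij}^r$ with one further Serre duality should recover $\varphi_{ij}^r$ up to the Gorenstein shift recorded in Proposition \ref{prop:graded-frob}, which both guides the diagram-chase and confirms that the two calculations are consistent with one another.
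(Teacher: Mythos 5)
Your reduction of the statement to multiplicativity, and your observation that the first two stages of $\Delta_{ij}^r$ (Serre duality in the $P_i$-argument, then the $(\tau^-,\tau)$-adjunction) coincide with those of $\varphi_{ij}^r$ so that the corresponding squares from the proof of Theorem \ref{thm:pi-is-frob} carry over with $g$ fixed, is sound; the double-dual square is also harmless, since the evaluation isomorphism is natural with respect to \emph{any} linear map, including $\phi\mapsto\Delta(g)\circ\tau^s\phi$. The gap is in the remaining square, the one you attribute to ``naturality of Serre duality in the second argument combined with bifunctoriality of $\Hom$''. That square compares Serre duality for the pair $(P_j,\tau^{r}I_i)$ with Serre duality for the pair $(P_k,\tau^{r+s}I_i)$: the vertical map on the $\Pi$-side is built from $g$ together with the adjunction, while the vertical map on the $\iP$-side is $\phi\mapsto\Delta(g)\circ\tau^s\phi$. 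This is not an instance of (bi)naturality of Serre duality: the projective argument changes from $P_j$ to $P_k$, but the only connecting datum is $g:P_j\to\tau^{-s}P_k$, which is not a morphism between the projective arguments, and the module argument changes through the adjunction, not through an actual map $\tau^rI_i\to\tau^{r+s}I_i$. Moreover the $\iP$-side vertical involves $\Delta(g)$ itself, so ``naturality in $g$'' cannot even be formulated without first relating $\Delta(g)$ back to $g$ --- which is exactly the content to be proved.

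Concretely, writing $\lambda_P\colon\Hom_\Lambda(P,\nu P)\to\F$ for the Serre functional and $\operatorname{adj}$ for the adjunction, commutativity of that square amounts to the identity $\lambda_{P_j}\bigl(\phi\circ\operatorname{adj}(h)\circ g\bigr)=\lambda_{P_k}\bigl(\Delta(g)\circ\tau^s\phi\circ h\bigr)$ for all $h:P_k\to\tau^{r+s}I_i$ and $\phi:\tau^rI_i\to I_j$. This is precisely the ``enhanced naturality'' of Serre duality with respect to the adjunction that the proposition encodes, and which the paper isolates afterwards as Corollary \ref{cor:nat+} --- so you cannot quote that corollary here without circularity. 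The square can be repaired: unwinding the definition of $\Delta$, the element $\Delta(g)$ is characterized by $\lambda_{P_k}(\Delta(g)\circ\psi)=\lambda_{P_j}(\operatorname{adj}(\psi)\circ g)$ for all $\psi:P_k\to\tau^sI_j$; applying this with $\psi=\tau^s\phi\circ h$ and using naturality of the adjunction to identify $\operatorname{adj}(\tau^s\phi\circ h)=\phi\circ\operatorname{adj}(h)$ gives the required identity. But this unwinding is the heart of the proof and is missing from your argument. The paper avoids the issue differently: it pushes \emph{both} tensor factors of the multiplication map through the chain of isomorphisms simultaneously, choosing the intermediate pairings so that every constituent square is an honest one-step naturality square for Serre duality or for the adjunction; the price is a longer diagram, the gain is that no square ever needs $\Delta(g)$ before $\Delta$ has been understood.
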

\begin{proof}
We only need to check that $\Delta$ is an algebra homomorphism, i.e., that the following diagram (with $\Hom$s omitted) commutes:
\[
\xymatrix { 
(P_i,\tau^{-r}P_j)\otimes (P_j,\tau^{-s}P_k)\ar[r]\ar[d]^\sim & (P_i,\tau^{-r-s}P_k)\ar[d]^\sim \\
(\tau^{r}I_i,I_j)\otimes (\tau^{s}I_j,I_k)\ar[r] & (\tau^{r+s}I_i,I_k)
}
\]
We use the definitions of the multiplication maps and break the diagram up as follows:
\[
\xymatrix { 
(P_i,\tau^{-r}P_j)\otimes (\tau^{-r}P_j,\tau^{-r-s}P_k)\ar[r]\ar[d] & (P_i,\tau^{-r-s}P_k)\ar[d] \\
(\tau^{-r}P_j,I_i)^*\otimes (\tau^{-r}P_j,\tau^{-r-s}P_k)\ar[r]\ar[d] & (\tau^{-r-s}P_k,I_i)^*\ar[d] \\
(P_j,\tau^{r}I_i)^*\otimes (P_j,\tau^{-s}P_k)\ar[r]\ar[d] & (\tau^{-s}P_k,\tau^{r}I_i)^*\ar@{=}[d] \\
(\tau^{r}I_i,I_j)^{**}\otimes (\tau^{-s}P_k,I_j)^*\ar[r]\ar[d] & (\tau^{-s}P_k,\tau^{r}I_i)^*\ar[d] \\
(\tau^{r+s}I_i,\tau^sI_j)^{**}\otimes (P_k,\tau^s I_j)^*\ar[r]\ar[d] & (P_k,\tau^{r+s}I_i)^*\ar[d] \\
(\tau^{r+s}I_i,\tau^sI_j)^{**}\otimes (\tau^{s}I_j,I_k)^{**}\ar[r] & (\tau^{r+s}I_i,I_k)^{**}
}
\]
The constituent squares commute because of the naturality of Serre duality and the naturality of the adjunction, alternatively.  
\end{proof}
We obtain two corollaries.  The first can be seen as an enhanced naturality property of Serre duality, and is a useful tool for showing that diagrams commute.
\begin{corollary}\label{cor:nat+}
Fix $f:P_i\to\tau^{-r}P_j$.  Then the following diagram commutes:
\[
\xymatrix{
\Hom_\Lambda(P_j,\tau^{-s} P_k)\ar[r]^\sim\ar[d]^{\tau^{-r}} & \Hom_\Lambda(\tau^{-s} P_k,I_j)^*\ar[d]^{(1,\Delta(f))^*} \\
\Hom_\Lambda(\tau^{-r}P_j,\tau^{-r-s} P_k)\ar[d]^{(f,1)} & \Hom_\Lambda(\tau^{-s} P_k,\tau^r I_i)^*\ar[d]_\sim^{\operatorname{adj}} \\
\Hom_\Lambda(P_i,\tau^{-r-s} P_k)\ar[r]^\sim & \Hom_\Lambda(\tau^{-r-s} P_k,I_i)^*
}
\]
\end{corollary}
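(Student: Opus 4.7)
This statement is a close relative of Proposition~\ref{prop:PiiP}: it records the fact that the bijection $f \mapsto \Delta(f)$ between $\Hom_\Lambda(P_i, \tau^{-r}P_j)$ and $\Hom_\Lambda(\tau^r I_i, I_j)$ converts precomposition with $f$ on one side of Serre duality into postcomposition with $\Delta(f)$ on the other, once one accounts for the $\tau^{-}\dashv\tau$ adjunction. My plan is to prove it by the strategy used in the proof of Proposition~\ref{prop:PiiP}: decompose the outer square into a grid of smaller squares, each of which commutes by a single naturality principle.

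Concretely, I would begin by unfolding the definition of $\Delta(f)$ as the image of $f$ under the four-step chain (Serre duality, adjunction, Serre duality, double-dual) displayed just before Proposition~\ref{prop:PiiP}. This factorization allows me to rewrite the right-hand vertical composite of the outer square, namely $(1,\Delta(f))^*$ followed by the adjunction, as a longer chain of natural isomorphisms. I would then insert matching intermediate vertices on the left-hand side between $\Hom_\Lambda(P_j,\tau^{-s} P_k)$ and $\Hom_\Lambda(P_i,\tau^{-r-s} P_k)$, producing a grid in which each cell is expected to commute by one of three principles: naturality of Serre duality in the projective argument, naturality of Serre duality in the module argument, or naturality of the adjunction of Proposition~\ref{prop:AR-adjoint}.

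The main obstacle is the careful bookkeeping required to align the left column (which precomposes with $f$ after applying $\tau^{-r}$) with the right column (which postcomposes with $\Delta(f)$ and then invokes the adjunction). I do not expect any conceptual difficulty: the required diagram is essentially the restriction of the diagram in the proof of Proposition~\ref{prop:PiiP} to a single element $f$ in its first tensor factor, and the same three naturality principles used there apply here. Once the grid is laid out and each cell is matched to its naturality source, the corollary follows without further computation.
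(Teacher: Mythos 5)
Your proposal is correct and matches the paper's own argument: the paper proves this corollary simply by observing that, with $f$ fixed, the diagram is a subdiagram of the large grid in the proof of Proposition~\ref{prop:PiiP}, whose cells commute by exactly the naturality principles you invoke. Reconstructing that grid explicitly, as you plan, is the same argument carried out in slightly more detail.
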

\begin{proof}
Having fixed $f$, this is a subdiagram of the large diagram in the previous proof.
\end{proof}
The second relates the (``left'') preprojective algebra to the ``right'' preprojective algebra.
By Proposition \ref{prop:AR-adjoint}, $\tau^-:\Lambda\mMod\to\Lambda\mMod$ is given by tensoring on the left with the bimodule $E=\Ext^1_\Lambda(\Lambda^*,\Lambda)\cong\Ext^1_{\Lambda\otimes_\F\Lambda^\op}(\Lambda,\Lambda\otimes_\F\Lambda)$, which is left-right symmetric.  So $\tau^-:\Lambda^\op\mMod\to\Lambda^\op\mMod$ is given by tensoring on the right with $E$, and thus we can write $\tau^-\Lambda$ unambiguously whether we are dealing with left of right modules.
\begin{corollary}\label{cor:lrprep}
 We have an algebra anti-isomorphism
\begin{align*}
R: \bigoplus_{r\geq0}\Hom_\Lambda(\Lambda,\tau^{-r}\Lambda)\arr\sim &\bigoplus_{r\geq0}\Hom_{\Lambda^\op}(\Lambda,\tau^{-r}\Lambda)\\
 f\mapsto &\ell_{f(1)}
\end{align*}
which respects the grading.  It sends a map $f:P_i\to\tau^{-r}P_j$ to a map $R(f):{_jP}\to\tau^{-r}{_iP}$.
\end{corollary}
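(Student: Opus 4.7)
The plan is to identify both algebras with the tensor algebra of the $\Lambda$-bimodule $E$ and then to recognize $R$ as an anti-isomorphism via this identification.

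First, I would iterate Proposition \ref{prop:AR-adjoint} to see that $\tau^{-r}\Lambda\cong E^{\otimes_\Lambda r}$ as a $\Lambda$-bimodule. Because $\tau^-$ acts as $E\otimes_\Lambda-$ on left modules and as $-\otimes_\Lambda E$ on right modules with the same symmetric underlying bimodule $E$, this identification works uniformly for both module structures on $\tau^{-r}\Lambda$. Evaluation at $1\in\Lambda$ then yields graded vector space isomorphisms
\[ \bigoplus_{r\geq 0}\Hom_\Lambda(\Lambda,\tau^{-r}\Lambda)\ \cong\ \bigoplus_{r\geq 0} E^{\otimes r}\ \cong\ \bigoplus_{r\geq 0}\Hom_{\Lambda^\op}(\Lambda,\tau^{-r}\Lambda), \]
both sending $\phi\mapsto\phi(1)$. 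Under these identifications, $R$ becomes the identity on $\bigoplus_r E^{\otimes r}$, and the formula $R(f)=\ell_{f(1)}$ simply says that $R(f)$ is the right module map determined by $1\mapsto f(1)$.

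The main (and really only substantive) step will be to verify that the two multiplications correspond to opposite operations on the tensor algebra. Unwinding Definition \ref{def:preproj}, for $f$ of degree $r$ and $g$ of degree $s$ in $\Pi$ I expect to find $(gf)(1)=f(1)\otimes g(1)$, because $\tau^{-r}g$ on $E^{\otimes r}$ sends $x\mapsto x\otimes g(1)$. On the right-module side, the analogous calculation should give $(\chi\psi)(1)=\chi(1)\otimes\psi(1)$, since $\tau^{-r}\chi$ on $E^{\otimes r}$ sends $x\mapsto\chi(1)\otimes x$. These orderings are opposite, which is exactly the statement that $R$ is an anti-homomorphism rather than a homomorphism; bijectivity and grading preservation are then immediate. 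The hard part is not the calculation itself but the careful bookkeeping of which side $\tau^{-r}$ tensors on when acting on left versus right modules.

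For the final clause of the statement I would restrict to idempotents: $f:P_i\to\tau^{-r}P_j$ is characterized by $f(\varepsilon_i)\in\varepsilon_i E^{\otimes r}\varepsilon_j$, which is exactly the subspace parametrizing right module maps ${_jP}=\varepsilon_j\Lambda\to\varepsilon_i E^{\otimes r}=\tau^{-r}{_iP}$, via the standard identification of Hom spaces out of cyclic modules. Everything else is formal given the symmetric bimodule description of $\tau^{-r}\Lambda$.
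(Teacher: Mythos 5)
Your argument is correct, but it is not the route the paper takes. The paper's proof is a one-liner: apply the $\F$-linear dual to the postinjective algebra $\iP$, i.e.\ compose the graded algebra isomorphism $\Delta\colon\Pi\arr\sim\iP$ of Proposition \ref{prop:PiiP} with the contravariant duality $(-)^*$, which turns maps between left injectives $\tau^rI_i\to I_j$ into maps between right projectives ${}_jP\to\tau^{-r}{}_iP$ and reverses composition; all the work was already done in Proposition \ref{prop:PiiP}, and this choice keeps $\iP$ in play, which matters later when the Nakayama automorphism $\sigma$ is defined via $\Delta$. What you do instead is essentially the alternative the paper itself sketches in the remark immediately after this corollary: use that evaluation at $1$ identifies both $\Hom_\Lambda(\Lambda,-)$ and $\Hom_{\Lambda^\op}(\Lambda,-)$ with the identity on bimodules, here made concrete via $\tau^{-r}\Lambda\cong E^{\otimes_\Lambda r}$ from Proposition \ref{prop:AR-adjoint} and the left--right symmetry of $E$, and then check on elements that the two multiplications give $f(1)\otimes g(1)$ in opposite orders, so that $R$ is an anti-homomorphism. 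Your element computations $(gf)(1)=f(1)\otimes g(1)$ and $(\chi\psi)(1)=\chi(1)\otimes\psi(1)$ are right, as is the idempotent bookkeeping for the final clause (note only that the paper's $\varepsilon_i$ is the idempotent of $\Pi$, while you are really using the idempotents $e_i\in\Lambda$). What each approach buys: the paper's dualization is shorter, reuses $\Delta$, and feeds directly into the description of $\sigma$; yours is self-contained and more elementary, avoiding Serre duality and the postinjective algebra entirely, at the cost of the careful bookkeeping you flag about which side $\tau^{-r}$ tensors on for left versus right modules.
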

\begin{proof}
Take the $\F$-linear dual of $\iP$.
\end{proof}
We therefore have the following diagram of graded algebras:
\[
\xymatrix{
\text{left preprojective algebra} \ar[r]^\sim &
\text{left postinjective algebra} \ar[d]^{\text{anti-isom}} \\
\text{right postinjective algebra} \ar[u]^{\text{anti-isom}} &
 \text{right preprojective algebra} \ar[l]_\sim
}
\]
\begin{remark}
If one just wanted to relate the left and right preprojective algebras, an alternative approach would be to use the fact that both functors $\Hom_\Lambda(\Lambda,-)$ and $\Hom_{\Lambda^\op}(\Lambda,-)$ are naturally isomorphic to the identity functor on the category of $\Lambda\otimes_\F\Lambda^\op$-modules.  Explicitly, we have isomorphisms \[\Hom_\Lambda(\Lambda,M)\arr\sim \Hom_{\Lambda^\op}(\Lambda,M)\]
given by $f\mapsto [a\mapsto f(1)a]$.  Then one can check on elements that this gives an algebra anti-isomorphism.
\end{remark}

\subsection{The Nakayama automorphism}\label{ss:nak}

We will now describe the Nakayama automorphism of $\Pi$, denoted $\sigma$.
Given a map
$$f:P_i\to \tau^{-r}P_j$$
of left $\Lambda$-modules we first apply the algebra isomorphism $\Delta$ of Proposition \ref{prop:PiiP}, and pre- and post-compose with the isomorphisms which exhibit our injectives as preprojective modules, to obtain a map
\[ \tau^{r-\ell(i)} P_{\rho(i)}\to \tau^{-\ell(j)}P_{\rho(j)} \]
between preprojectives.  
Then we use the adjunction between $\tau^-$ and $\tau$ to obtain:
\[ \sigma(f):{P_{\rho(i)}} \to \tau^{\ell(i)-\ell(j)-r}{P_{\rho(j)}}. \]

\begin{theorem}\label{thm:nak}
$\sigma$ is a Nakayama automorphism of $\Pi$, i.e., we have an isomorphism
$$\Pi\cong(\Pi^*)_{\sigma}$$
of $\Pi\da\Pi$-bimodules.
\end{theorem}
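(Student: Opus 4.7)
The plan is to upgrade the left $\Pi$-module isomorphism $\varphi:\Pi\to\Pi^*$ furnished by Proposition \ref{prop:graded-frob}, whose left-linearity was already verified in the proof of Theorem \ref{thm:pi-is-frob}, into a bimodule isomorphism $\Pi\arr\sim(\Pi^*)_\sigma$. This reduces to verifying the twisted right-compatibility
\[\varphi(fh) = \varphi(f)\cdot\sigma(h)\]
for all $f,h\in\Pi$, where the right-hand side uses the natural right action of $\Pi$ on $\Pi^*$.

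By bilinearity it suffices to check this when $f:P_i\to\tau^{-r}P_j$ and $h:P_m\to\tau^{-q}P_i$, in which case $fh=\tau^{-q}f\circ h$ and both sides live in $\Hom_\Lambda(P_j,\tau^{r+q-\ell(m)}P_{\rho(m)})^*$. Testing on an element $x$ of $\Hom_\Lambda(P_j,\tau^{r+q-\ell(m)}P_{\rho(m)})$, the identity unwinds into a scalar equality to be obtained by chasing two paths through a diagram built from Serre dualities, the $\tau^-\dashv\tau$ adjunction, and the identifications $I_k\cong\tau^{-\ell(k)}P_{\rho(k)}$.

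The definition of $\sigma$ in Section \ref{ss:nak} is engineered so that $\sigma(h)$ is the image of $h$ under essentially the same sequence of operations, mediated by the algebra isomorphism $\Delta:\Pi\arr\sim\iP$ of Proposition \ref{prop:PiiP}, that appears inside $\varphi_{ij}^r$. My plan is to assemble a commutative diagram analogous to the large diagram used in the proof of Theorem \ref{thm:pi-is-frob}, but with $h$ pre-composing on the source rather than $g$ post-composing on the target. Each constituent square should commute by one of the following: the enhanced naturality of Serre duality from Corollary \ref{cor:nat+}; naturality of the $\tau^-\dashv\tau$ adjunction; or the algebra-homomorphism property of $\Delta$, which translates right multiplication in $\Pi$ into the corresponding product in $\iP$.

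The main obstacle is bookkeeping: the relevant diagram will have many vertices, each carrying coordinated grading shifts, and the three ingredients must be composed in precisely the order prescribed by the definition of $\sigma$. Once the diagram is drawn correctly, no new ideas beyond those in the previous sections should be required; the conceptual point is that defining $\sigma$ through $\Delta$ forces the required bimodule identity, because it is precisely the twist that converts the two-sided naturality of the Serre pairing into a right-linearity statement.
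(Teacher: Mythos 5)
Your proposal is correct and follows essentially the same route as the paper: left-linearity is taken from the Frobenius isomorphism already established, and the theorem is reduced to the twisted right-compatibility $\varphi(fh)=\varphi(f)\cdot\sigma(h)$, which is then verified by a diagram chase whose squares commute by Corollary \ref{cor:nat+}, naturality of the $\tau^-\dashv\tau$ adjunction, and the definition of $\sigma$ via $\Delta$ and the identifications $I_k\cong\tau^{-\ell(k)}P_{\rho(k)}$ --- exactly the ingredients the paper uses. The only difference is that you leave the final diagram as bookkeeping, whereas the paper writes it out, but the plan as stated would carry through.
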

\begin{proof}
Choose some
$$f:P_i\to \tau^{-r}P_j$$
and
$$g:P_j\to \tau^{-s}P_k$$
in $\Pi$.  
We apply the chain of isomorphisms of hom spaces from Theorem \ref{thm:pi-is-frob} to $g$:
$$
\Hom_\Lambda(P_j,\tau^{-s}P_k)\cong\Hom_\Lambda(\tau^{-s}P_k,I_j)^*\cong\Hom_\Lambda(P_k,\tau^sI_j)^*\cong\Hom_\Lambda(P_k,\tau^{s-\ell(j)}P_{\rho(j)})^*$$
We need to show that the following diagram commutes:
$$\xymatrix{
\Hom_\Lambda(P_j,\tau^{-s}P_k)\ar[r]^(0.4)\sim\ar[d] &\Hom_\Lambda(P_k,\tau^{s-\ell(j)}P_{\rho(j)})^*\ar[d]\\
\Hom_\Lambda(P_i,\tau^{-r-s}P_k)\ar[r]^(0.4)\sim &\Hom_\Lambda(P_k,\tau^{r+s-\ell(i)}P_{\rho(i)})^*
}
$$
The left vertical map is the right action of $f$, so we apply $\tau^{-r}$ to $g$ and then precompose with $f$, and the right vertical map is given by post-composition with $\tau^{r+s-\ell(i)}\sigma(f)$ before applying the functional.

By Corollary \ref{cor:nat+} we have that the following diagram commutes:
$$
\xymatrix{
\Hom_\Lambda(P_j,\tau^{-s} P_k)\ar[r]^\sim\ar[d] & \Hom_\Lambda(\tau^{-s} P_k,I_j)^*\ar[d] \\
\Hom_\Lambda(\tau^{-r}P_j,\tau^{-r-s} P_k)\ar[d] & \Hom_\Lambda(\tau^{-s} P_k,\tau^{r} I_i)^*\ar[d]^\sim \\
\Hom_\Lambda(P_i,\tau^{-r-s} P_k)\ar[r]^\sim & \Hom_\Lambda(\tau^{-r-s} P_k,I_i)^*
}
$$
and so we can remove duals from the rest of the diagram, and it remains to check that the following diagram commutes:
$$
\xymatrix{
\Hom_\Lambda(\tau^{-s} P_k,I_j)\ar[r]^\sim &\Hom_\Lambda(P_k,\tau^sI_j)\ar[r]^(0.4)\sim &\Hom_\Lambda(P_k,\tau^{s-\ell(j)}P_{\rho(j)})\\
\Hom_\Lambda(\tau^{-s} P_k,\tau^{r} I_i)\ar[u] &&\\
\Hom_\Lambda(\tau^{-r-s} P_k,I_i)\ar[u]^\sim\ar[r]^\sim &\Hom_\Lambda(P_k,\tau^{r+s}I_i)\ar[r]^(0.45)\sim &\Hom_\Lambda(P_k,\tau^{r+s-\ell(i)}P_{\rho(i)})\ar[uu]
}
$$
The left vertical map is defined by first using the adjunction and then postcomposing with ${\Delta(f)}$, and the
right vertical map is post-composition with $\tau^{r+s-\ell(i)}\sigma(f)$.  By definition of $\sigma$ and the horizontal maps, the following diagram commutes:
$$
\xymatrix{
\Hom_\Lambda(P_k,\tau^sI_j)\ar[r]^(0.42)\sim &\Hom_\Lambda(P_k,\tau^{s-\ell(j)}P_{\rho(j)})\\
\Hom_\Lambda(P_k,\tau^{r+s}I_i)\ar[r]^(0.42)\sim\ar[u] &\Hom_\Lambda(P_k,\tau^{r+s-\ell(i)}P_{\rho(i)})\ar[u]
}
$$
where the left vertical map is post-composition with $\tau^s \Delta(f):\tau^{r+s}I_i\to \tau^sI_j$.  So it remains to check that the square
$$
\xymatrix{
\Hom_\Lambda(\tau^{-s} P_k,I_j)\ar[r]^\sim &\Hom_\Lambda(P_k,\tau^sI_j)\\
\Hom_\Lambda(\tau^{-s} P_k,\tau^{r} I_i)\ar[u] &\\
\Hom_\Lambda(\tau^{-r-s} P_k,I_i)\ar[u]^\sim\ar[r]^\sim &\Hom_\Lambda(P_k,\tau^{r+s}I_i)\ar[uu]
}
$$
commutes.  The bottom left vertical map and the bottom horizontal maps are both just adjunction maps, so the triangle in the following square commutes
$$
\xymatrix{
\Hom_\Lambda(\tau^{-s} P_k,I_j)\ar[r]^\sim &\Hom_\Lambda(P_k,\tau^sI_j)\\
\Hom_\Lambda(\tau^{-s} P_k,\tau^{r} I_i)\ar[u]\ar[dr]^\sim &\\
\Hom_\Lambda(\tau^{-r-s} P_k,I_i)\ar[u]^\sim\ar[r]^\sim &\Hom_\Lambda(P_k,\tau^{r+s}I_i)\ar[uu]
}
$$
and the remaining square commutes by the naturality of the adjunction.
\end{proof}

For $\Lambda=\F Q$ the path algebra of an ADE Dynkin quiver, an explicit formula for the Nakayama automorphism was given by Brenner, Butler, and King.  In this case, there is at most one arrow of $Q$ with a given source and target, so we can write $y_{ij}$ for an arrow $\alpha:i\to j$ of $Q$.
Recall that $\Pi$ is generated by the arrows $y_{ij}=\alpha:i\to j$ of $Q$ and by arrows $y_{ji}=\alpha^*:j\to i$ in the opposite direction.  The former corresponds to irreducible maps between projectives and the latter to irreducible maps from projectives to summands of $E=\tau^-\Lambda$.  Then we have the following formula \cite[{Definition 4.6 and Theorem 4.8}]{bbk}:
\begin{theorem}[The BBK formula]
For $Q$ an ADE Quiver, the Nakayama automorphism $\sigma$ of the preprojective algebra of $\Lambda=\F Q$ acts on generators by:
\[ \sigma(y_{ij})=
\begin{cases}
      y_{\rho(i)\rho(j)} & \text{if $y_{ij}\in Q$ or $y_{\rho(i)\rho(j)}\in Q$};\\
      -y_{\rho(i)\rho(j)} & \text{if $y_{ij}\notin Q$ and $y_{\rho(i)\rho(j)}\notin Q$}.
    \end{cases}
\]
\end{theorem}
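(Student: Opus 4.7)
The plan is to unpack the recipe for $\sigma$ from Section~\ref{ss:nak} on the arrow generators, identify each image up to scalar as $y_{\rho(i)\rho(j)}$, and then pin down the sign. The generators come in two types: a degree-$0$ generator $y_{ij}: P_i \to P_j$ coming from an arrow $\alpha: i \to j$ of $Q$, and a degree-$1$ generator $y_{ji}: P_j \to \tau^{-}P_i$ coming from the opposite arrow $\alpha^*$. Each case should be computed separately.

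For the degree-$0$ case, I would first observe that on morphisms between projectives the isomorphism $\Delta$ of Proposition~\ref{prop:PiiP} reduces to the Nakayama functor, so $\Delta(y_{ij}) = \nu(y_{ij}): I_i \to I_j$. Then use the chosen identifications $I_k \cong \tau^{-\ell(k)} P_{\rho(k)}$, apply $\tau^{\ell(j)}$, and invoke the $(\tau^{-},\tau)$-adjunction to arrive at $\sigma(y_{ij}): P_{\rho(i)} \to \tau^{\ell(i)-\ell(j)} P_{\rho(j)}$. The degree-$1$ case is analogous, starting from $\Delta(y_{ji}): \tau I_j \to I_i$. In both cases the resulting map is irreducible because $\nu$ and the AR translates preserve irreducibility, and since for ADE quivers the space of irreducible maps between two indecomposable preprojectives is at most one-dimensional, $\sigma(y_{ij})$ is forced to be a scalar multiple of $y_{\rho(i)\rho(j)}$. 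That the scalar has absolute value $1$ is then immediate, since the whole construction is a composition of isomorphisms between compatibly normalized one-dimensional hom spaces.

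The main obstacle is determining the sign. The BBK formula is pinned to the specific signs in the Gelfand--Ponomarev relations $\sum_{\alpha} (\alpha\alpha^* - \alpha^*\alpha) = 0$, whereas our $\sigma$ introduces signs through the two Serre-duality steps and through the $(\tau^{-}, \tau)$-adjunction. My plan is to rigidify the identifications $I_k \cong \tau^{-\ell(k)} P_{\rho(k)}$ so that $\nu$ carries the mesh relation at each vertex $i$ to the mesh relation at $\rho(i)$; this then forces the sign on each arrow generator. I expect the four cases of the BBK formula to correspond to the parities of the degrees of $y_{ij}$ and of $y_{\rho(i)\rho(j)}$, with the sign $-1$ appearing precisely in the case where both are opposite arrows (both of degree $1$), where the two Serre-duality contributions fail to cancel.
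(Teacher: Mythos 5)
The first thing to note is that the paper does not prove this statement at all: it is imported from Brenner--Butler--King \cite[Definition 4.6, Theorem 4.8]{bbk}, where it is established by direct, type-by-type verification of the Frobenius bimodule condition using the quiver presentation (their Section 4.4). The remark immediately following the theorem explicitly identifies the derivation you are attempting --- recovering the BBK formula from Theorem \ref{thm:nak} --- as a hoped-for but not-yet-achieved result. So you are not reconstructing the paper's argument but proposing a new one, and the question is whether your plan closes the gap the author deliberately left open. It does not.

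Your reduction of $\sigma(y_{ij})$ to a scalar multiple of $y_{\rho(i)\rho(j)}$ via one-dimensionality of the spaces of irreducible maps is plausible (though you assert rather than prove that $\Delta$ and the adjunction steps send irreducible maps to irreducible maps). But the entire content of the theorem is the sign, and there your argument is only an expectation. The automorphism $\sigma$ of Section \ref{ss:nak} depends on the chosen isomorphisms $I_i\cong\tau^{-\ell(i)}P_{\rho(i)}$; rescaling these changes $\sigma$ by an inner automorphism, so the signs are not intrinsic to the construction. The paper's own worked example makes this concrete: for the quiver $1\to2\to3\leftarrow4$ the computation gives $\sigma(r_\alpha)=-r_\gamma$ where the BBK formula demands $+r_\gamma$, and the two automorphisms agree only after conjugating by $-\varepsilon_1+\varepsilon_2+\varepsilon_3+\varepsilon_4$, equivalently after rescaling the isomorphism $I_1\cong E_4$ by $-1$. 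Hence your proposed fix --- ``rigidify the identifications so that $\nu$ carries the mesh relation at $i$ to the mesh relation at $\rho(i)$, which then forces the sign'' --- is precisely the missing theorem, not a step in its proof. You would need to show that such a compatible system of normalizations exists globally (one scalar per indecomposable preprojective, constrained simultaneously by every mesh containing it and by the identifications at the injective end of each $\tau$-orbit), and that with respect to it the construction contributes $-1$ exactly when both $y_{ij}$ and $y_{\rho(i)\rho(j)}$ are starred arrows. Neither claim is argued, and the length of the case analysis in \cite{bbk} is evidence that this is not a formal parity argument.
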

\begin{remark}
It would be interesting to recover the BBK formula directly from Theorem \ref{thm:nak}.  The hope is that such a derivation would avoid the complicated calculations necessary in Section 4.4 of \cite{bbk}.  Such calculations become more difficult in higher global dimension, as illustrated by the appendix of the arXiv version of \cite{ep}.
\end{remark}

\subsection{Worked example}
To make calculations it is useful to use the isomorphism $\tau^-\cong E\otimes_\Lambda-$ from Proposition \ref{prop:AR-adjoint} and the classical fact that, for $\Lambda =\F Q$, the bimodule $E$ is generated by the dual of the arrow space of $Q$ (see \cite[Proposition 3.1]{gi} for a modern treatment).

\begin{example}
Let $Q$ be the quiver
\[  1\stackrel\alpha\longrightarrow 2\stackrel\beta\longrightarrow 3\stackrel \gamma \longleftarrow 4\]
and let $\Lambda=\F Q$ be the path algebra of $\Lambda$.  Unlike functions, we compose arrows in a path right to left, so the nontrivial length $2$ path is denoted $\beta\alpha$.  We write the projective left $\Lambda$-modules as $P_i=\Lambda e_i$ and the preprojectives as $E^r_i=E^{\otimes_\Lambda r}e_i$.  
They have bases
\begin{align*}
&P_1 = \gen { e_1}, \;\;\; &&P_2 = \gen { e_2, \alpha}, \;\;\;  &&P_3 = \gen { e_3, \beta, \gamma, \alpha\beta}, \;\;\; &&P_4 = \gen { e_4},\\
&E_1 = \gen { \alpha^*}, \;\;\; &&E_2 = \gen { \beta^*, \beta\beta^*, \gamma\beta^*}, \;\;\;  &&E_3 = \gen { \beta^*\beta,  \beta\beta^*\beta}, \;\;\; &&E_4 = \gen { \gamma^*, \beta\gamma^*,\alpha\beta\gamma^*}, \\
&E^2_1=\gen{\beta^*\alpha^*, \gamma\beta^*\alpha^*}, \;\;\; &&E^2_2=\gen{\beta^*\alpha^*\alpha } && &&
\end{align*}
where $\alpha^*\alpha=\beta\beta^*$ and $\beta^*\beta=-\gamma^*\gamma$.

The Auslander-Reiten quiver of $\Lambda\mMod$ is as follows:
\[ \xymatrix{ 
P_1\ar[dr]_{\alpha} &&E_1\ar[dr] &&E^2_1\ar[dr] & \\
& P_2\ar[ur]_{\alpha^*}\ar[dr]_\beta &(+)&  E_2\ar[ur]\ar[dr] &(+)& E^2_2 \\
&&  P_3\ar[ur]_{\beta^*}\ar[dr]^{\gamma^*} &(-)& E_3\ar[ur] & \\
& P_4\ar[ur]^\gamma &&E_4\ar[ur] &&
}\]
The leftmost arrows are labelled by arrows and their duals: the map $P_1\to P_2$ sends $e_1$ to $\alpha$, and the map $P_2\to E_1$ sends $e_2$ to $\alpha^*$.  These can be seen as right multiplication, and then the arrows obtained by applying $\tau^-$ can be interpreted as the same multiplications: for example, the map $E_1\to E_2$ sends $\alpha^*\mapsto\alpha^*\alpha=\beta\beta^*$.
Note that not all squares commute: the anticommuting square is marked $(-)$.  

The injective modules have bases
\[ I_1=\gen{\beta^*\alpha^*,\alpha^*,e_1^*},\;\;\; I_2=\gen{\beta^*, e_2^*},\;\;\; I_3=\gen{e_3^*},\;\;\; I_4=\gen{\gamma^*,e_4^*}. \]
Our vertex permutation $\rho$ sends $i$ to $5-i$ and our length function is given by $\ell(1)=\ell(2)=2$ and $\ell(3)=\ell(4)=1$.  
We fix isomorphisms 
$I_i\cong E^{\ell(i)}_{\rho(i)}$ 
which are determined by
\[ 
I_1\ni e_1^*\mapsto \alpha\beta\gamma^*\in E_4, \;\;\;
I_2\ni e_2^*\mapsto \beta\beta^*\beta\in E_3, \;\;\;
I_3\ni e_3^*\mapsto \beta^*\alpha^*\alpha \in E^2_2, \;\;\;
I_4\ni e_4^*\mapsto \gamma\beta^*\alpha^* \in E^2_1.
\]

We start by applying the maps $\varphi_{ij}^r$ of Theorem \ref{thm:pi-is-frob} to the maps out of $P_1$, so $i=1$.  So we work with
\[ \Pi\varepsilon_1=\gen{ \; \id_{P_1}:P_1\to P_1, \;\;\; r_\alpha:P_1\to P_2, \;\;\;  r_{\alpha\beta}:P_1\to P_3, \;\;\; r_{\alpha\beta\gamma^*}:P_1\to E_4\; }. \]
Serre duality sends $\id_{P_1}$ to the dual of the map $P_1\to I_1$ sending $e_1\mapsto e_1^*$.  Using our isomorphism $I_1\cong E_4$ we get the dual of $r_{\alpha\beta\gamma^*}:P_1\to E_4$.  Similarly, the degree $0$ maps $r_\alpha:P_1\to P_2$ and $r_{\alpha\beta}$ are sent to the duals of $r_{\beta\gamma^*}:P_2\to E_4$ and $r_{\gamma^*}:P_3\to E_4$, respectively.  Serre duality sends the degree $1$ map $r_{\alpha\beta\gamma^*}$ to the dual of the map $E_4\to I_1$ sending $\alpha\beta\gamma^*\mapsto e_1^*$.  Composing with $I_1\cong E_4$, this sends $\alpha\beta\gamma^*$ to itself, so on applying $\tau$ we get $\id_{P_4}$.  This shows that $(\Pi\varepsilon_1)^*\cong \varepsilon_4\Pi\grsh{1}$, as predicted by the dual of Proposition \ref{prop:graded-frob}.

Now we calculate the Nakayama automorphism $\sigma$.  The identity map on $P_i$ is clearly sent to the identity map on $P_{\rho(i)}$ and $\sigma$ is an algebra automorphism, so it is enough to find the images of the irreducible maps $P_i\to P_j$ and $P_i\to E_j$.

We start with irreducible maps $P_i\to P_j$.
First, $\Delta$ sends $r_\alpha:P_1\to P_2$ to the map $I_1\to I_2$ sending $\alpha^*$ to $e_2^*$.  So using our fixed isomorphisms we get the map $E_4\to E_3$ sending $\beta\gamma^*$ to $\beta\beta^*\beta=-\beta\gamma^*\gamma$.  So, applying $\tau^-$, we see that $\sigma(r_\alpha)$ is the map $-r_\gamma:P_4\to P_3$ sending $e_4$ to $-\gamma$.  Second, $\Delta(r_\beta)$ sends $\beta^*\in I_2$ to $e_3^*\in I_3$, so we get $E_3\to E^2_2$ sending $\beta^*\beta$ to $\beta^*\alpha^*\alpha=\beta^*\beta\beta^*$, and applying $\tau^-$ gives $r_{\beta^*}:P_3\to E_2$.  Third, $\Delta(r_\gamma)$ sends $\gamma^*\in I_4$ to $e_3^*\in I_3$, so we get $E^2_2\to E^2_1$ sending $\beta^*\alpha^*$ to $\beta^*\alpha^*\alpha$, and applying $\tau^-$ gives $r_{\alpha}:P_1\to P_2$.

Now we determine how $\sigma$ acts on irreducible maps $P_i\to E_j$.   First, consider $r_{\alpha^*}:P_2\to E_1$.   Applying Serre duality gives the dual of a map $E_1\to I_2$ which sends $\alpha^*$ to $e_2^*$.  Under our isomorphism $I_2\cong E_3$, $e_2^*$ corresponds to $\beta\beta^*\beta=\alpha^*\alpha\beta$, so applying $\tau^-$ gives the map $r_{\alpha\beta}:P_1\to P_3$.  Using Serre duality once more gives us the map $P_3\to I_1$ sending $\alpha\beta$ to $e_1^*$, which corresponds to $\alpha\beta\gamma^*$ in $E_4$.  So $\sigma(r_{\alpha^*})=r_{\gamma^*}:P_3\to E_4$.  

Second, we take $r_{\beta^*}:P_3\to E_2$.  Applying Serre duality gives the dual of a map $E_2\to I_3$ which sends $\beta^*$ to $e_3^*$.  Under our isomorphism $I_3\cong E^2_2$, $e_3^*$ corresponds to $\beta^*\alpha^*\alpha$, so applying $\tau^-$ gives the map $r_{\alpha^*\alpha}:P_2\to E_2$.  Using Serre duality once more gives us the map $E_2\to I_2$ sending $\alpha^*\alpha=\beta\beta^*$ to $e_2^*$, which corresponds to $\beta\beta^*\beta$ in $E_3$.  Applying $\tau$ once more shows that $\sigma(r_{\beta^*})=r_{\beta}:P_2\to P_3$. 

Third, consider $r_{\gamma^*}:P_3\to E_4$.  Applying Serre duality gives the dual of a map $E_4\to I_3$ which sends $\gamma^*$ to $e_3^*$.  Under our isomorphism $I_3\cong E^2_2$, $e_3^*$ corresponds to $\beta^*\alpha^*\alpha=\beta^*\beta\beta^*=-\gamma^*\gamma\beta^*$, so applying $\tau^-$ gives the map $-r_{\gamma\beta^*}:P_4\to E_2$.  Using Serre duality once more gives us the map $E_2\to I_4$ sending $-\gamma\beta^*$ to $e_4^*$, which corresponds to $\gamma\beta^*\alpha^*$ in $E_3$.  Applying $\tau$ once more shows that $\sigma(r_{\gamma^*})=-r_{\alpha}:P_2\to P_3$. 

Below, we have two copies of the quiver of $\Pi^\op$, with $Q$ as the subquiver on the top arrows.  
In the first copy we record the sign of its image under the Nakayama automorphism as calculated above.  The second copy shows the sign appearing in the BBK formula.
\[
\xymatrix{
1 \ar@/^/[r]^{-} & 2\ar@/^/[l]^{+}\ar@/^/[r]^{+} & 3\ar@/^/[l]^{+}\ar@/_/[r]_{-} & 4\ar@/_/[l]_{+} && 1 \ar@/^/[r]^{+} & 2\ar@/^/[l]^{-}\ar@/^/[r]^{+} & 3\ar@/^/[l]^{+}\ar@/_/[r]_{-} & 4\ar@/_/[l]_{+}
}
\]
These automorphisms differ by an inner automorphism of $\Pi$: we can conjugate by $-\varepsilon_1+\varepsilon_2+\varepsilon_3+\varepsilon_4$.  To recover the BBK automorphism directly, we should scale our isomorphism $I_1\cong E_4$ by $-1$ so that it sends $e_1^*\mapsto -\alpha\beta\gamma^*$.
\end{example}


\section{Higher preprojective algebras}\label{sec:higher}

\subsection{Generalizations}

All the constructions and proofs in this paper generalize immediately to $d$-representation finite and $(d+1)$-preprojective algebras \cite{io-napr}.  Indeed, they were originally conceived in this setting, but for the sake of clear exposition we have explained everything in the classical setting.  To modify the statements and proofs, one only needs to add a subscript $d$ to each $\tau$ and $\tau^-$ \cite{iya-higher-ar}. 

Of course, one needs to know the theorems we rely on still hold in this generality, but they do:
\[
\begin{tabular}{c|c}
Result used above & $d$-AR generalization \\
  \hline \hline
  Proposition \ref{prop:AR-quasi-inv} & Theorem 1.4.1 of \cite{iya-higher-ar} \\
  Proposition \ref{prop:AR-adjoint} & Proposition 2.10 of \cite{gi}, \\&based on Lemma 2.13 of \cite{io-stab} \\
  Definition \ref{def:preproj} & Definition 2.11 of \cite{io-stab} \\
  Theorem \ref{thm:rf-condition} & Proposition 1.3(b) of \cite{iya-ct-higher}
\end{tabular}
\]

Therefore, if $\Lambda$ is a $d$-representation finite algebra, and $\Pi$ is its $(n+1)$-preprojective algebra, we get the following results.

\begin{theorem}\label{thm:pi-is-frob2}
If $\Lambda$ is basic and $d$-representation finite then $\Pi$ is Frobenius.
\end{theorem}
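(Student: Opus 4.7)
The plan is to repeat the proof of Theorem \ref{thm:pi-is-frob} essentially verbatim, replacing $\tau^-$ and $\tau$ throughout by their higher analogues $\tau_d^-$ and $\tau_d$, and invoking the $d$-analogues of each cited result as listed in the table above. The skeleton of the argument does not use any special feature of the classical Auslander-Reiten translate beyond the formal properties collected in Propositions \ref{prop:AR-quasi-inv} and \ref{prop:AR-adjoint} together with Theorem \ref{thm:rf-condition}, each of which has a direct $d$-analogue.

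Concretely, I would first apply Proposition 1.3(b) of \cite{iya-ct-higher} to obtain a permutation $\rho:S\arr\sim S$ and a length function $\ell:S\to\Z_{\geq 0}$ with $I_i\cong\tau_d^{-\ell(i)}P_{\rho(i)}$. Using basicness of $\Lambda$, I would then decompose
\[ \Pi = \bigoplus_{i,j\in S}\bigoplus_{r=0}^{\ell(j)}\Hom_\Lambda(P_i,\tau_d^{-r}P_j), \]
and for each triple $(i,j,r)$ with $0\leq r\leq \ell(j)$ construct the chain
\[ \varphi_{ij}^r:
\Hom_\Lambda(P_i,\tau_d^{-r}P_j) \larr\sim
\Hom_\Lambda(\tau_d^{-r}P_j,I_i)^* \larr\sim
\Hom_\Lambda(P_j,\tau_d^r I_i)^* \larr\sim
\Hom_\Lambda(P_j,\tau_d^{r-\ell(i)}P_{\rho(i)})^* \]
using Serre duality (the Nakayama functor $\nu$ itself is unchanged in the $d$-setting), the higher $(\tau_d^-,\tau_d)$-adjunction of Proposition 2.10 of \cite{gi}, and the identification above. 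Summing over $i$, $j$ and $r$ yields a vector space isomorphism $\Pi\cong\Pi^*$.

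The main work, as in the classical case, is checking that $\bigoplus\varphi_{ij}^r$ intertwines the left $\Pi$-actions. I would reproduce the diagram chase of Theorem \ref{thm:pi-is-frob} word for word: the large diagram there decomposes into squares that commute for one of three reasons -- naturality of Serre duality, naturality of the $(\tau_d^-,\tau_d)$-adjunction, and bifunctoriality of $\Hom_\Lambda(-,-)$ -- none of which is sensitive to the value of $d$. The only additional AR-theoretic input is the quasi-inverse identification $\tau_d^{-s}\tau_d^{r+s-\ell(i)}P_{\rho(i)}\cong\tau_d^{r-\ell(i)}P_{\rho(i)}$, which is precisely the content of Theorem 1.4.1 of \cite{iya-higher-ar}.

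The main obstacle is conceptual rather than computational: one must check that the formal properties of $(\tau_d^-,\tau_d)$ being used really are available in the stated generality, and in particular that naturality of higher Serre duality holds in the form required. Once this is granted, the proof is identical to that of Theorem \ref{thm:pi-is-frob}, which is exactly why the author has organized the exposition so as to generalize without further effort.
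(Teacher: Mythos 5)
Your proposal matches the paper's proof exactly: the paper simply says ``As for Theorem \ref{thm:pi-is-frob}'', relying on the table of $d$-analogues (Theorem 1.4.1 of \cite{iya-higher-ar}, Proposition 2.10 of \cite{gi}, Definition 2.11 of \cite{io-stab}, Proposition 1.3(b) of \cite{iya-ct-higher}) in just the way you describe. Your spelling out of the substitutions and of which commuting squares need which naturality statement is a faithful, slightly more explicit version of the same argument.
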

\begin{proof}
As for Theorem \ref{thm:pi-is-frob}.
\end{proof}
Then, defining $\sigma$ as in Section \ref{ss:nak} but with $\tau^-_d$ and $\tau_d$, we get
\begin{theorem}\label{thm:nak2}
$\sigma$ is a Nakayama automorphism of $\Pi$, i.e., we have an isomorphism
$$\Pi\cong(\Pi^*)_{\sigma}$$
of $\Pi\da\Pi$-bimodules.
\end{theorem}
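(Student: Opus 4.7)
The plan is to follow the proof of Theorem \ref{thm:nak} line by line, replacing $\tau$ by $\tau_d$ and $\tau^-$ by $\tau_d^-$ throughout. Every ingredient of that proof has a $d$-analogue already tabulated above: the $\tau_d^-$-$\tau_d$ adjunction, the characterization of $d$-representation finiteness via $I_i\cong\tau_d^{-\ell(i)}P_{\rho(i)}$, and the quasi-inverse property of the $d$-translations on the appropriate subcategories. Serre duality itself is unchanged because it is a statement about the Nakayama functor $\nu$, which does not depend on $d$. The postinjective algebra (Definition \ref{def:postinj}) and its isomorphism with $\Pi$ (Proposition \ref{prop:PiiP}) also carry over, since both rely only on Serre duality, the adjunction, and bifunctoriality of $\Hom_\Lambda(-,-)$.

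First I would fix $f:P_i\to\tau_d^{-r}P_j$ and $g:P_j\to\tau_d^{-s}P_k$ in $\Pi$ and apply the chain of isomorphisms used in Theorem \ref{thm:pi-is-frob2} to $g$, obtaining
\[
\Hom_\Lambda(P_j,\tau_d^{-s}P_k) \larr\sim \Hom_\Lambda(\tau_d^{-s}P_k,I_j)^* \larr\sim \Hom_\Lambda(P_k,\tau_d^s I_j)^* \larr\sim \Hom_\Lambda(P_k,\tau_d^{s-\ell(j)}P_{\rho(j)})^*.
\]
The task is then to show that the square with this chain on the source and the analogous chain on $\Hom_\Lambda(P_i,\tau_d^{-r-s}P_k)$ on the target, whose left vertical is right multiplication by $f$ and whose right vertical is post-composition with $\tau_d^{r+s-\ell(i)}\sigma(f)$ before applying the functional, commutes.

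Next I would invoke the $d$-analogue of Corollary \ref{cor:nat+}. Since its proof uses only Serre duality, the $\tau_d^-$-$\tau_d$ adjunction, and bifunctoriality, it holds verbatim in the higher setting. Applying it strips the outer duals and reduces the square to a diagram of $\Hom$ spaces without duals, involving only the $d$-translations together with the fixed isomorphisms $I_i\cong\tau_d^{-\ell(i)}P_{\rho(i)}$.

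Finally, the remaining diagram chase decomposes exactly as in the proof of Theorem \ref{thm:nak} into three pieces: a square commuting by the definition of $\sigma$ in terms of $\Delta$ and the identifications of injectives with preprojectives; a triangle in which both legs are the $\tau_d^-$-$\tau_d$ adjunction and hence coincide; and a final square commuting by naturality of that adjunction. Each step is formal and insensitive to the value of $d$, so no essentially new argument is required. The main obstacle is purely combinatorial bookkeeping, keeping track of the indices $\ell(i)$, $\ell(\rho(i))$ and the powers $\tau_d^{r+s-\ell(i)}$ that appear in the various shifts; once this is in order, the proof transcribes itself.
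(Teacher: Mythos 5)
Your proposal is correct and matches the paper's approach exactly: the paper's proof of Theorem \ref{thm:nak2} is simply ``As for Theorem \ref{thm:nak}'', relying on the same table of $d$-Auslander--Reiten generalizations (the $\tau_d^-$--$\tau_d$ adjunction, the quasi-inverse property, and the fact that injectives are $d$-preprojective) that you invoke. Your additional remarks, that Serre duality and the postinjective-algebra isomorphism carry over unchanged, are accurate elaborations of the same argument rather than a different route.
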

\begin{proof}
As for Theorem \ref{thm:nak}.
\end{proof}

To make calculations, we use that $\tau^-_2\cong E\otimes_\Lambda-$ where $E$ is generated by the dual of the relation space of $\Lambda$.  
\begin{example}
Let $\Lambda=\F Q/I$ where $Q$ is the quiver
$ 1\stackrel\alpha\longrightarrow 2\stackrel\beta\longrightarrow 3 $ and $I=\gen{\alpha\beta}$.  $\Lambda$ is isomorphic to the Auslander algebra of the path algebra of the quiver $1\to 2$ and so is $2$-representation finite.  The projective and $2$-preprojective left $\Lambda$-modules are
\[ P_1 = \gen { e_1}, \;\;\; P_2 = \gen { e_2, \alpha}, \;\;\;  P_3 = \gen { e_3, \beta}, \;\;\; E_1=\gen{ \beta^*\alpha^*}. \]
The $2$-Auslander-Reiten quiver of $\Lambda\mMod$ is as follows:
\[ \xymatrix @=10pt { 
&&  P_3\ar[ddr]^{\beta^*\alpha^*} & \\
& P_2\ar[ur]^{\beta} && \\
P_1\ar[ur]^{\alpha} &&& E_1
}\]
Note that the composition $P_1\to P_2\to P_3$ is zero.

The injective modules have bases
\[ I_1=\gen{\alpha^*,e_1^*},\;\;\; I_2=\gen{\beta^*, e_2^*},\;\;\; I_3=\gen{e_3^*}. \]
Our vertex permutation is given by $\rho(1)=2$, $\rho(2)=3$, and $\rho(3)=1$ and our length function is given by $\ell(1)=\ell(2)=0$ and $\ell(3)=1$.  
We fix isomorphisms 
$I_i\cong E^{\ell(i)}_{\rho(i)}$ 
which are determined by
\[ 
I_1\ni e_1^*\mapsto \alpha\in P_2, \;\;\;
I_2\ni e_2^*\mapsto \beta\in P_3, \;\;\;
I_3\ni e_3^*\mapsto \beta^*\alpha^*\in E_1.
\]

Using Serre duality it is easy to check that
\[ (\Pi\varepsilon_1)^*\cong \varepsilon_2\Pi\grsh{0}, \;\;\; (\Pi\varepsilon_2)^*\cong \varepsilon_3\Pi\grsh{0}, \;\;\; (\Pi\varepsilon_3)^*\cong \varepsilon_1\Pi\grsh{1} \]
as predicted by the generalization of the dual of Proposition \ref{prop:graded-frob}.

Now we calculate the Nakayama automorphism $\sigma$.  $\Delta$ sends $r_\alpha:P_1\to P_2$ to the map $I_1\to I_2$ sending $\alpha^*$ to $e_2^*$, which corresponds to sending $e_2\in P_2$ to $\beta\in P_3$, so $\sigma(r_\alpha)=r_\beta:P_2\to P_3$.  Similarly, $\sigma(r_\beta)=r_{\beta^*\alpha^*}:P_3\to E_1$.  Finally, applying Serre duality to $r_{\beta^*\alpha^*}$ gives the dual of the map $E_1\to I_3$ sending $\beta^*\alpha^*$ to $e_1$ which, using our isomorphism $I_3\cong E_1$, gives the identity map on $E_3$.  So applying $\tau_2^-$ gives the identity map on $P_1$, and applying Serre duality once more gives $P_1\to I_1$ sending $e_1$ to $e_1^*$.  Using $I_1\cong P_2$ gives the map $P_1\to P_2$ sending $e_1$ to $\alpha$, so we get $\sigma(r_{\beta^*\alpha^*})=r_\alpha:P_1\to P_2$.  Note that this agrees with the calculation of Herschend and Iyama \cite[Theorem 3.5]{hi-frac}.
\end{example}


\end{document}